\newtheorem{theorem}{Theorem}[section]
\newtheorem{corollary}[theorem]{Corollary}
\newtheorem{lemma}[theorem]{Lemma}
\newtheorem{proposition}[theorem]{Proposition}
\theoremstyle{definition}
\theoremstyle{remark}
\newtheorem{remark}[theorem]{Remark}
\numberwithin{equation}{section}
\begin{document}

\title[A new class of Carleson measures and integral operators]
{A new class of Carleson measures and integral operators on Bergman spaces}

\author[Hicham Arroussi]{Hicham Arroussi}
\address{\newline Hicham Arroussi \newline Department of Mathematics and Statistics, University of Helsinki, P.O.Box 68, Helsinki, Finland
\newline Department of Mathematics and Statistics, University of Reading, England}
\email{arroussihicham@yahoo.fr, h.arroussi@reading.ac.uk, \newline hicham.arroussi@helsinki.fi}

\author[Huijie Liu]{Huijie Liu}
\address{\newline Huijie Liu \newline Department of Mathematics, Hebei University of Technology, Tianjin 300401, China}
\email{liuhuijie\_l@163.com}

\author[Cezhong Tong]{Cezhong Tong}
\address{\newline Cezhong Tong \newline Department of Mathematics, Hebei University of Technology, Tianjin 300401, China}
\email{ctong@hebut.edu.cn, cezhongtong@hotmail.com}

\author[Zicong Yang]{Zicong Yang}
\address{\newline Zicong Yang \newline Department of Mathematics, Hebei University of Technology, Tianjin 300401, China.}
\email{zc25@hebut.edu.cn}

\begin{abstract}
Let $n$ be a positive integer and $\mathbf{g}=(g_0,g_1,\cdots,g_{n-1})$, with $g_k\in H(\mathbb{D})$ for $k=0,1,\cdots,n-1$. Let $I_{\mathbf{g}}^{(n)}$ be the generalized Volterra-type operators on $H(\mathbb{C})$, which is represented as
$$
I_{\mathbf{g}}^{(n)}f=I^n\left(fg_0+f'g_1+\cdots+f^{(n-1)}g_{n-1}\right),
$$
where $I$ denotes the integration operator
$$(If)(z)=\int_0^zf(w)dw,$$
and $I^n$ is the $n$th iteration of $I$. This operator is a generalization of the operator that was introduced by Chalmoukis in \cite{Cn}. In this paper, we study the boundedness and compactness of the operator $I_{\mathbf{g}}^{(n)}$ acting on Bergman spaces to another. As a consequence of these characterizations, we obtain conditions for certain linear differential equations to have solutions in Bergman spaces. Moreover, we study the boundedness, compactness and Hilbert-Schmidtness of the following sums of generalized weighted composition operators: Let $\mathbf{u}=(u_0,u_1,\cdots,u_n)$ with $u_k\in H(\mathbb{D})$ for $0\leq k\leq n$ and $\varphi$ be an analytic self-map of $\mathbb{D}.$ The sums of generalized weighted composition operators is defined by 
$$L_{\mathbf{u},\varphi}^{(n)}=\sum_{k=0}^nW_{u_k,\varphi}^{(k)},$$
where 
$$W_{u_k,\varphi}^{(k)}f=u_k\cdot f^{(k)}\circ\varphi.$$
Our approach involves the study of  new class of Sobolev-Carleson measures for classical Bergman spaces on unit disk  which appears in the first main Theorems \ref{Theorem1.1} and \ref{Theorem1.2}.
\end{abstract}

\subjclass[2020]{30H20; 47B38; 46E35.}
\keywords{Bergman space; Sobolev Carleson measure; Volterra type operator; composition-differentiation operator.}

\maketitle

\section{Introduction and main results}

\subsection{Carleson measure} 

Carleson measures are essential in modern complex analysis, harmonic analysis and operator theory. The notion of Carleson measures for the Bergman spaces is first introduced by Hastings in \cite{Hw} and further pursued by Luecking in \cite{Ld,Ld1} and others. In the study of linear operators on spaces of holomorphic functions, the Carleson measure is a powerful tool to characterize the properties of many classical operators such as Toeplitz operators, Hankel operators, Volterra type operators and composition operators. Our motivation comes from the study of integral operators on the Bergman space along the line: Aleman and Siskakis \cite{AaSa, AaSa1} and Chalmoukis \cite{Cn}. In that spirit, it is natural to introduce a new class of Carleson measures to characterize the boundedness and compactness of the generalized integral operators.

Let $\mathbb{D}$ be the open unit disc in the complex plane $\mathbb{C}$ and $H(\mathbb{D})$ be the space of all analytic functions on $\mathbb{D}$. For $0<p<\infty$, the classical Bergman space $A^p$ is defined as
\begin{equation*}
A^p=\left\{f\in H(\mathbb{D}):\|f\|_{p}^p=\int_{\mathbb{D}}|f(z)|^pdA(z)<\infty\right\},
\end{equation*}
where $dA=\frac{dxdy}{\pi}$ is the normalized area measure on $\mathbb{D}$. It is known that $A^p$ is a Banach space for $1\leq p<\infty$. In particular, $A^2$ is a Hilbert space with the following inner product
$$\langle f,g\rangle_2=\int_{\mathbb{D}}f(z)\overline{g(z)}dA(z).$$
For more information about Bergman spaces, one can refer to \cite{ZrZk,Zk}.

Recall that a positive Borel measure $\mu$ on $\mathbb{D}$ is called a $(k,p,q)$-Carleson measure for Bergman spaces if there exists a constant $C>0$ such that
$$\int_{\mathbb{D}}|f^{(k)}(z)|^qd\mu(z)\leq C\|f\|_p^q,$$
for all $f\in A^p$. Similarly, $\mu$ is called a vanishing $(k,p,q)$-Carleson measure if 
$$\lim_{j\to\infty}\int_{\mathbb{D}}|f_j^{(k)}(z)|^qd\mu(z)=0,$$
whenever $\{f_j\}$ is a bounded sequence in $A^p$ that converges to 0 uniformly on compact subsets of $\mathbb{D}$.

Fix a positive integer $n$ and let $\mathbf{u}=(u_0,u_1,\cdots,u_n)$ with $u_k\in H(\mathbb{D}) $ for $0\leq k\leq n$. We say that $\mu$ is a $(\mathbf{u},p,q)$-Sobolev Carleson measure for Bergman spaces if there exists a constant $C>0$ such that
\begin{equation*}
\int_{\mathbb{D}}|\langle\mathbf{u}(z),(\mathcal{D}_nf)(z)\rangle|^qd\mu(z)\leq C\|f\|_p^q,
\end{equation*}
for all $f\in A^p$, where $\mathcal{D}_nf=(f,f',\cdots,f^{(n-1)})$ and 
$$\langle \mathbf{u}, \mathcal{D}_nf\rangle=u_0f+u_1f'+\cdots+u_{n-1}f^{(n-1)}.$$
 Similarly, $\mu$ is called a vanishing $(\mathbf{u},p,q)$-Sobolev Carleson measure if 
\begin{equation*}
\lim_{j\to\infty}\int_{\mathbb{D}}\left|\langle\mathbf{u}(z),(\mathcal{D}_nf_j)(z)\rangle\right|^qd\mu(z)=0,
\end{equation*}
whenever $\{f_j\}$ is a bouned sequence in $A^p$ that converges to $0$ uniformly on compact subsets of $\mathbb{D}$.

The objective of this paper is first to characterize (vanishing) $(\mathbf{u},p,q)$-Sobolev Carleson measures for Bergman spaces. Then, by using these characterizations, we study several problems in function-theoretic operator theory such as the boundedness and compactness of generalized Volterra-type operators between different Bergman spaces and the sum of generalized weighted composition operators with different orders. Moreover, we give a sufficient condition for the solution to certain linear differential equations lying in Bergman spaces.

The first two main results are the following.

\begin{theorem}\label{Theorem1.1}
If $0<p\leq q<\infty$, then 
\begin{itemize}
\item[(a)] $\mu$ is a $(\mathbf{u},p,q)$-Sobolev Carleson measure if and only if each $|u_k|^qd\mu$, $0\leq k\leq n$, is a $(k,p,q)$-Carleson measure, and
\item[(b)] $\mu$ is a vanishing $(\mathbf{u},p,q)$-Sobolev Carleson measure if and only if each $|u_k|^qd\mu$, $0\leq k\leq n$, is a vanishing $(k,p,q)$-Carleson measure.
\end{itemize}
\end{theorem}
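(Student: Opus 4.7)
My plan is to prove (a) by splitting into the two implications and then deduce (b) by rerunning the same argument with ``bounded'' replaced by ``convergent to zero on compacta''.

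For the \emph{easy direction} of (a), assume each $|u_k|^q d\mu$ is a $(k,p,q)$-Carleson measure. The $q$-triangle inequality gives the pointwise bound
$$
|\langle\mathbf{u}(z),(\mathcal D_nf)(z)\rangle|^q \le C_q\sum_{k}|u_k(z)|^q|f^{(k)}(z)|^q,
$$
with $C_q=(n+1)^{\max(q-1,0)}$. Integrating against $d\mu$ and invoking the individual Carleson estimates termwise yields the Sobolev Carleson bound.

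For the \emph{hard direction} I induct on $n$, the base case being tautological. The inductive step reduces to a \emph{decoupling lemma}: if $\mu$ is $(\mathbf u,p,q)$-Sobolev Carleson with $\mathbf u=(u_0,\ldots,u_n)$, then $|u_n|^q d\mu$ is a $(n,p,q)$-Carleson measure. Granted this lemma, a second application of the $q$-triangle inequality shows that $\mu$ is also $((u_0,\ldots,u_{n-1}),p,q)$-Sobolev Carleson, and the induction hypothesis then finishes (a). To prove the decoupling I plan to plug into the Sobolev Carleson inequality test functions that isolate the top-order derivative at a reference point $a\in\mathbb D$. A natural candidate is $f_{a,g}(z)=\varphi_a(z)^{n}g(z)$, with $\varphi_a(z)=(a-z)/(1-\bar a z)$ the standard M\"obius involution and $g$ a normalized reproducing-kernel power based at $a$: the factor $\varphi_a^n$ forces $f_{a,g}^{(k)}(a)=0$ for $k<n$, while $f_{a,g}^{(n)}(a)$ is comparable to $g(a)(1-|a|^2)^{-n}$, and $\|f_{a,g}\|_p\lesssim\|g\|_p$ because $|\varphi_a|\le 1$ on $\mathbb D$. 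Combining this with the standard kernel-based characterization of $(n,p,q)$-Carleson measures (see e.g.\ Luecking \cite{Ld, Ld1}) should translate the Sobolev Carleson bound into the desired Carleson condition on $|u_n|^q d\mu$.

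Part (b) mirrors (a): sufficiency is immediate from the $q$-triangle inequality applied termwise along a bounded sequence $\{f_j\}\subset A^p$ tending to zero on compacta, and necessity reruns the same induction and decoupling with vanishing data in place of uniform bounds. I anticipate that the main obstacle will be the decoupling lemma itself: although $f_{a,g}$ kills the lower-order derivatives \emph{exactly} at $a$, it does not kill them at nearby points in the support of $\mu$, and a priori the contribution $\int|\sum_{k<n}u_k f_{a,g}^{(k)}|^q d\mu$ could swamp the leading term $\int|u_n f_{a,g}^{(n)}|^q d\mu$. Overcoming this will require sharp pointwise bounds on $\varphi_a^{(j)}(z)$ together with a localization of the integration to pseudohyperbolic disks around $a$, so that the lower-order contributions can be absorbed into the inductive Carleson bound.
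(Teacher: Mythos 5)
Your overall architecture is sound and close in spirit to the paper's: the sufficiency direction via the $q$-triangle inequality is exactly what the paper does, and the reduction of necessity to a single ``decoupling'' statement (the top-order measure $|u_n|^q d\mu$ is $(n,p,q)$-Carleson) followed by stripping off the top term and inducting downward is also the paper's strategy. The problem is that the decoupling lemma --- which the paper itself flags as the surprising and nontrivial content of the theorem --- is not actually proved in your proposal, and the route you sketch for it has a circularity. Your test function $f_{a,g}=\varphi_a^n g$ kills $f^{(k)}$ for $k<n$ only at the single point $a$, whereas the Sobolev Carleson inequality integrates over a whole neighborhood of $a$ in the support of $\mu$. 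You correctly identify this as the main obstacle, but your proposed fix --- absorbing $\int|\sum_{k<n}u_k f_{a,g}^{(k)}|^q d\mu$ ``into the inductive Carleson bound'' --- presupposes Carleson control of the lower-order measures $|u_k|^q d\mu$, $k<n$, which is precisely what the induction has not yet established at the moment the decoupling lemma is invoked. Even if one tries to close this by a bootstrap (shrinking the radius to gain a small factor $\epsilon^{n-k}$ from $|\varphi_a|^{n-k}$ on $D(a,r)$), a single inequality of the form $M_n\lesssim 1+\epsilon\sum_{k<n}M_k$ cannot be solved for $M_n$ without a priori finiteness of all the $M_k$, so the argument as described does not close.

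The paper resolves this differently: instead of one test function, it tests against the full family $K_w^{[i]}(z)=z^i(1-\overline{w}z)^{-\gamma-i}$ for $i=0,1,\dots,n$, obtaining $n+1$ integral inequalities, and then performs an exact Gaussian elimination \emph{inside the integrals} with $z$-dependent coefficients (e.g.\ forming $|z|\cdot(3.3)+(3.4)$ in the $n=1$ case); the algebraic identity underlying the cancellation is Lemma 2.7, packaged as Proposition 2.6. Because the lower-order terms cancel identically on all of $D(w,r)$ rather than being absorbed, no a priori bound on the lower-order measures is needed. If you want to salvage your approach, the natural repair is to use the whole triangular family $\varphi_a^m g$, $m=0,\dots,n$, which yields a triangular system of inequalities analogous to the paper's matrix $B_{n+1,w}$, and then either carry out the elimination exactly or run the $\epsilon$-absorption on the full system after first restricting to $|a|\le 1-\delta$ (where all quantities are trivially finite) and letting $\delta\to 0$. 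As written, however, the decoupling step is a genuine gap.
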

 
It is clear from the triangle inequality that if, for each $0\leq k\leq n$, $|u_k|^qd\mu$ is a $(k,p,q)$-Carleson measure, then $\mu$ is a $(\mathbf{u},p,q)$-Sobolev Carleson measure. The surprising part here is the converse. We mention that the corresponding result for Hardy spaces was partly obtained in \cite{Cn}.

\begin{theorem}\label{Theorem1.2}
If $0<q<p<\infty$, then the following statements are equivalent.
\begin{itemize}
\item[(a)] $\mu$ is a $(\mathbf{u},p,q)$-Sobolev Carleson measure;
\item[(b)] $\mu$ is a vanishing $(\mathbf{u},p,q)$-Sobolev Carleson measure;
\item[(c)] Each $|u_k|^qd\mu$, $0\leq k\leq n$, is a $(k,p,q)$-Carleson measure;
\item[(d)] Each $|u_k|^qd\mu$, $0\leq k\leq n$, is a vanishing $(k,p,q)$-Carleson measure.
\end{itemize}
\end{theorem}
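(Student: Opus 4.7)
My plan is to establish the four-way equivalence by the cycle (b)$\Rightarrow$(a)$\Rightarrow$(c)$\Rightarrow$(d)$\Rightarrow$(b). The two easy links (b)$\Rightarrow$(a) and (d)$\Rightarrow$(b) come directly from the definitions together with the pointwise estimate $|\langle\mathbf{u},\mathcal{D}_n f\rangle|^q\lesssim_q\sum_{k=0}^{n-1}|u_k f^{(k)}|^q$: summing the individual (vanishing) $(k,p,q)$-Carleson bounds on the component measures $|u_k|^q\,d\mu$ produces the (vanishing) $(\mathbf{u},p,q)$-Sobolev Carleson property for $\mu$.

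The equivalence (c)$\iff$(d) is the classical observation, essentially due to Luecking, that in the range $0<q<p<\infty$ every $(k,p,q)$-Carleson measure on $A^p$ is automatically vanishing. The standard route is to characterize such measures by $L^{p/(p-q)}$-integrability of an averaged (Berezin-type) transform of $\mu$ with the appropriate derivative weight, and then to apply dominated convergence to a Luecking-type factorization of $A^p$-functions to deduce the vanishing property from the $L^{p/(p-q)}$ bound.

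The substantive step is the separation implication (a)$\Rightarrow$(c), which I would handle by adapting the test-function argument that drives Theorem~\ref{Theorem1.1}(a). For each $a\in\mathbb{D}$ one plugs into the Sobolev--Carleson hypothesis the family $f_{a,N}(z)=(1-\bar a z)^{-N}$ (suitably normalized in $A^p$) for several distinct values of $N$; the explicit form of $f_{a,N}^{(j)}$ produces a Vandermonde-type linear system in the quantities $u_j(\cdot)\bar a^j$, and inverting this system extracts, for each $k$, an individual estimate of the form $\int_{\mathbb{D}}|u_k f^{(k)}|^q\,d\mu\lesssim\|f\|_p^q$. Chaining (a)$\Rightarrow$(c)$\iff$(d)$\Rightarrow$(b) then closes the cycle. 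The main obstacle is carrying out this algebraic separation uniformly in $a\in\mathbb{D}$ with the sharp $(1-|a|^2)$-scaling so that the extracted bounds are genuine $(k,p,q)$-Carleson estimates; this is exactly the same technical core as in Theorem~\ref{Theorem1.1}(a), and I expect the argument to transfer with only minor modifications specific to the regime $q<p$.
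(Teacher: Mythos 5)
Your outline of the easy implications and of (c)$\iff$(d) matches the paper (which simply cites Luecking's result, Lemma~\ref{lemma2.4}), but there is a genuine gap in your treatment of the substantive step (a)$\Rightarrow$(c). Testing the Sobolev--Carleson hypothesis against a family of single kernels $f_{a,N}(z)=(1-\bar a z)^{-N}$ indexed by one point $a\in\mathbb{D}$, and then performing the Vandermonde/elimination separation, can only ever produce estimates of the form
\begin{equation*}
\sup_{a\in\mathbb{D}}\frac{\bigl(\int_{D(a,r)}|u_k|^q\,d\mu\bigr)^{1/q}}{(1-|a|^2)^{k+\frac{2}{p}}}<\infty .
\end{equation*}
That supremum condition characterizes $(k,p,q)$-Carleson measures only when $p\le q$ (Lemma~\ref{lemma2.4}(i)); in the regime $0<q<p<\infty$ it is strictly weaker than the required condition, namely $L^{p/(p-q)}$-integrability of $z\mapsto \mu_k(D(z,r))/(1-|z|^2)^{2+kq}$ (Lemma~\ref{lemma2.4}(ii)). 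So the ``minor modifications specific to the regime $q<p$'' you defer to are in fact the core of the proof, and your proposed test functions cannot supply them.

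What the paper does instead is Luecking's technique: fix an $r$-lattice $\{a_j\}$ and an arbitrary $\{\lambda_j\}\in l^p$, form the functions $h_k(z)=\sum_j\lambda_j z^k(1-|a_j|^2)^{\gamma+k-2/p}(1-\overline{a_j}z)^{-(\gamma+k)}$ (bounded in $A^p$ by Amar's Lemma~\ref{lemma2.5}), randomize the coefficients with Rademacher functions and apply Khinchine's inequality to pass from the $q$-th power of the sum to the $\ell^2$-square function, then perform the same algebraic separation as in Proposition~\ref{prop2.6} \emph{at the level of the lattice sums}. This yields
\begin{equation*}
\sum_j|\lambda_j|^q\,\frac{\int_{D(a_j,r)}|u_k|^q\,d\mu}{(1-|a_j|^2)^{(\frac{2}{p}+k)q}}\lesssim\Bigl(\sum_j|\lambda_j|^p\Bigr)^{q/p},
\end{equation*}
and the duality $(l^{p/q})^*=l^{p/(p-q)}$ converts this into the $L^{p/(p-q)}$ condition. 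Your Vandermonde separation idea is compatible with this framework (it plays the role of the Gaussian elimination in Proposition~\ref{prop2.6}), but without the lattice, the Rademacher randomization, and the duality step, the argument does not reach condition (c).
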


The equivalence of $(c)$ and $(d)$ above is well known. See \cite{Ld2} for example.

\subsection{Generalized integral operator}
For $g\in H(\mathbb{D})$, the Volterra-type operator $I_g$ and its companion operator $J_g$ on $H(\mathbb{D})$ are defined by 
\begin{equation*}
(I_gf)(z)=\int_{0}^zf(w)g'(w)dw\quad {\rm and}\quad (J_gf)(z)=\int_{0}^zf'(w)g(w)dw. 
\end{equation*}
These operators have been studied extensively on various spaces of analytic functions in one and several variables. We refer the readers to \cite{AaSa, AaSa1, Co, GpGdPj, Xj} for more details.

Further, let $a=(a_0,a_1,\cdots,a_{n-1})\in\mathbb{C}^n$. The generalized integral operator $I_{g,a}$ is defined as follows:
$$(I_{g,a}f)(z)=I^n(a_0fg^{(n)}+a_1f'g^{n-1}+\cdots+a_{n-1}f^{(n-1)}g')(z),$$
where $I$ denotes the integration operator 
$$(If)(z)=\int_{0}^zf(w)dw,$$
and $I^n$ is the $n$th iteration of $I$. The operator $I_{g,a}$ was first introduced on Hardy spaces by Chalmoukis in \cite{Cn}. Chalmoukis completely characterized the boundedness and compactness of $I_{g,a}:H^p\to H^q$ when $0< p\leq q<\infty$, where $H^p$ is the classical Hardy space on $\mathbb{D}$. When $0<q<p<\infty$, he conjectured that $g\in H^{\frac{pq}{p-q}}$ if $I_{g,a}: H^p\to H^q$ is bounded. In this paper, we extend Chalmoukis' result to Bergman spaces and show that the Bergman space version of Chalmoukis' conjecture is true.

Instead of a single function, we now consider a vector-valued function $\mathbf{g}$, namely, $\mathbf{g}=(g_0,g_1,\cdots,g_{n-1})$, where $g_i\in H(\mathbb{D})$ for $i=0,1,\cdots,n-1$. We define a more generalized integral operator $I_{\mathbf{g}}^{(n)}$ on $H(\mathbb{D})$ as follows:
$$I_{\mathbf{g}}^{(n)}f=I^n(fg_0+f'g_1+\cdots+f^{(n-1)}g_{n-1}).$$

It is clear that $I_{\mathbf{g}}^{(n)}$ coincides with Chalmoukis' integral operator $I_{g,a}$ when $g_j=a_jg^{(n-j)}$ $(j=0,1,\cdots,n-1)$ for some $g\in H(\mathbb{D})$. Using Sobolev Carleson measure, we will completely characterize the boundedness and compactness of $I_{\mathbf{g}}^{(n)}$ and then $I_{g,a}$ from $A^p$ to $A^q$ as we are going to see in the main results and their corollaries below.

For $m\in \mathbb{N}$ and $\alpha>0$, we set 
\begin{equation*}
\mathcal{B}^{m,\alpha}=\left\{f\in H(\mathbb{D}):\|f\|_{m,\alpha}=\sup_{z\in\mathbb{D}}|f^{(m)}(z)|(1-|z|^2)^{\alpha}<\infty\right\}
\end{equation*}
and 
\begin{equation*}
\mathcal{B}_{0}^{m,\alpha}=\left\{f\in H(\mathbb{D}):\lim_{|z|\to 1^{-}}|f^{(m)}(z)|(1-|z|^2)^{\alpha}=0\right\}.
\end{equation*}
It is known that $\mathcal{B}^{0,\alpha}=\mathcal{B}^{m,m+\alpha}$. For more information about these spaces, we refer to \cite{LbOYc, Zk2}.

\begin{theorem}\label{Theorem1.3}
Let $n$ be a positive integer and $\mathbf{g}=(g_0,g_1,\cdots,g_{n-1})$ with $g_k\in H(\mathbb{C})$ for $0\leq k\leq n-1$. If $0<p\leq q<\infty$, then 
\begin{itemize}
\item[(a)] $I_{\mathbf{g}}^{(n)}:A^p\to A^q$ is bounded if and only if $g_k\in \mathcal{B}^{0,n-k-\frac{2}{p}+\frac{2}{q}}$ for $n-k\geq\frac{2}{p}-\frac{2}{q}$ and $g_k=0$ for $n-k<\frac{2}{p}-\frac{2}{q}$.
\item[(b)] $I_{\mathbf{g}}^{(n)}:A^p\to A^q$ is compact if and only if $g_k\in \mathcal{B}_{0}^{0,n-k-\frac{2}{p}+\frac{2}{q}}$ for $n-k>\frac{2}{p}-\frac{2}{q}$ and $g_k=0$ for $n-k\leq\frac{2}{p}-\frac{2}{q}$.
\end{itemize}
\end{theorem}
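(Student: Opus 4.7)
The plan is to use the identity $(I_{\mathbf{g}}^{(n)}f)^{(n)}=\langle\mathbf{g},\mathcal{D}_nf\rangle$ together with the standard norm equivalence
\[
\|h\|_{q}^{q}\asymp\sum_{j=0}^{n-1}|h^{(j)}(0)|^{q}+\int_{\mathbb{D}}|h^{(n)}(z)|^{q}(1-|z|^{2})^{qn}\,dA(z)
\]
to convert the boundedness of $I_{\mathbf{g}}^{(n)}:A^p\to A^q$ into a Sobolev Carleson measure statement. Since $I_{\mathbf{g}}^{(n)}f$ vanishes to order $n$ at the origin, the point-evaluation terms disappear and
\[
\|I_{\mathbf{g}}^{(n)}f\|_{q}^{q}\asymp\int_{\mathbb{D}}|\langle\mathbf{g}(z),\mathcal{D}_nf(z)\rangle|^{q}(1-|z|^{2})^{qn}\,dA(z).
\]
Hence $I_{\mathbf{g}}^{(n)}:A^p\to A^q$ is bounded if and only if $d\mu(z):=(1-|z|^{2})^{qn}\,dA(z)$ is a $(\mathbf{g},p,q)$-Sobolev Carleson measure. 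Invoking Theorem~\ref{Theorem1.1}(a), this is equivalent to the assertion that, for every $0\le k\le n-1$, the measure $d\nu_{k}:=|g_{k}|^{q}(1-|z|^{2})^{qn}\,dA$ is a $(k,p,q)$-Carleson measure.

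Next I would characterize, for each fixed $k$, when $\nu_{k}$ is a $(k,p,q)$-Carleson measure. Using the standard isomorphism $f\mapsto f^{(k)}$ between $A^{p}$ (mod polynomials) and the weighted Bergman space $A^{p}_{pk}$, the Carleson property of $\nu_{k}$ is equivalent to the Luecking-type geometric condition $\nu_{k}(D(z,r))\lesssim(1-|z|^{2})^{qk+2q/p}$ on pseudohyperbolic discs. The subharmonicity of $|g_{k}|^{q}$ and the estimate $(1-|w|^{2})\asymp(1-|z|^{2})$ on $D(z,r)$ yield $\nu_{k}(D(z,r))\asymp|g_{k}(z)|^{q}(1-|z|^{2})^{qn+2}$, and the condition collapses to the pointwise bound
\[
|g_{k}(z)|\,(1-|z|^{2})^{\,n-k-2/p+2/q}\lesssim 1,\qquad z\in\mathbb{D}.
\]
Sufficiency of this bound is immediate: it lets one dominate $|g_{k}|^{q}(1-|z|^{2})^{qn}$ by the pure weight $(1-|z|^{2})^{qk+2q/p-2}$, whose area behaviour on pseudohyperbolic discs is exactly $(1-|z|^{2})^{qk+2q/p}$. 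Necessity follows by testing the Carleson inequality against the normalized kernels $f_{\lambda}(z)=(1-|\lambda|^{2})^{s-2/p}/(1-\bar{\lambda}z)^{s}$, which are uniformly bounded in $A^{p}$ and satisfy $|f_{\lambda}^{(k)}(w)|\asymp(1-|\lambda|^{2})^{-k-2/p}$ on $D(\lambda,r)$.

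If $n-k\ge 2/p-2/q$, the pointwise bound is exactly $g_{k}\in\mathcal{B}^{0,n-k-2/p+2/q}$. If $n-k<2/p-2/q$, the exponent becomes $-\beta$ with $\beta>0$, so the bound reads $|g_{k}(z)|\lesssim(1-|z|^{2})^{\beta}$. For any fixed $z_{0}\in\mathbb{D}$ and $|z_{0}|<r<1$, the maximum modulus principle gives $|g_{k}(z_{0})|\le\sup_{|z|=r}|g_{k}(z)|\lesssim(1-r^{2})^{\beta}\to 0$ as $r\to 1^{-}$, forcing $g_{k}\equiv 0$. This completes part (a).

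For part (b), I would run the parallel ``little-oh'' argument. Compactness of $I_{\mathbf{g}}^{(n)}:A^p\to A^q$ is equivalent (via the usual weak-convergence characterization of compactness on Bergman spaces) to $(1-|z|^{2})^{qn}\,dA$ being a vanishing $(\mathbf{g},p,q)$-Sobolev Carleson measure, which by Theorem~\ref{Theorem1.1}(b) splits into vanishing $(k,p,q)$-Carleson conditions on the $\nu_{k}$. The same test-function scheme now produces $\lim_{|z|\to1^-}|g_{k}(z)|(1-|z|^{2})^{n-k-2/p+2/q}=0$, i.e.\ $g_{k}\in\mathcal{B}_{0}^{0,n-k-2/p+2/q}$, in the range $n-k>2/p-2/q$; at the endpoint $n-k=2/p-2/q$ the target becomes $\mathcal{B}_{0}^{0,0}=\{0\}$ by maximum modulus, and below it the previous argument again forces $g_{k}\equiv 0$. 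The main obstacle I anticipate is the necessity direction: the non-trivial input is the splitting furnished by Theorem~\ref{Theorem1.1} (otherwise cancellations among the summands of $\langle\mathbf{g},\mathcal{D}_nf\rangle$ could mask individual growth in the $g_{k}$), together with the boundary-behaviour argument that upgrades a strictly positive decay rate of an analytic function to the identically-zero conclusion in the supercritical regime.
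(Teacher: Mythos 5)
Your proposal is correct and follows essentially the same route as the paper: reduce via the Littlewood--Paley identity to the statement that $(1-|z|^2)^{nq}\,dA$ is a (vanishing) $(\mathbf{g},p,q)$-Sobolev Carleson measure, split it into individual $(k,p,q)$-Carleson conditions via Theorem~\ref{Theorem1.1}, and then convert the geometric Carleson condition on Bergman discs into the pointwise $\mathcal{B}^{0,n-k-2/p+2/q}$ bound using subharmonicity for necessity and direct domination of the weight for sufficiency, with the maximum modulus principle forcing $g_k\equiv 0$ in the supercritical range. The only cosmetic caveat is that the two-sided claim $\nu_k(D(z,r))\asymp |g_k(z)|^q(1-|z|^2)^{qn+2}$ holds only with $\gtrsim$ pointwise, but your argument never actually relies on the reverse pointwise inequality, so nothing is lost.
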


\begin{theorem}\label{Theorem1.4}
Let $n$ be a positive integer and $\mathbf{g}=(g_0,g_1,\cdots,g_{n-1})$ with $g_k\in H(\mathbb{C})$ for $0\leq k\leq n-1$. If $0<q<p<\infty$, then the following conditions are equivalent:
\begin{itemize}
\item[(a)] $I_{\mathbf{g}}^{(n)}:A^p\to A^q$ is bounded;
\item[(b)] $I_{\mathbf{g}}^{(n)}:A^p\to A^q$ is compact;
\item[(c)] For each $0\leq k\leq n-1$,
\begin{equation*}
\int_{\mathbb{D}}|g_k(z)(1-|z|^2)^{n-k}|^{\frac{pq}{p-q}}dA(z)<\infty.
\end{equation*}
\end{itemize}
\end{theorem}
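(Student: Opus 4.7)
The plan is to translate the operator problem into a Sobolev Carleson measure problem, apply Theorem \ref{Theorem1.2}, and then invoke Luecking's off-diagonal characterization of $(k,p,q)$-Carleson measures for Bergman spaces.

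The first step notices that $(I_{\mathbf{g}}^{(n)}f)^{(j)}(0)=0$ for $0\leq j\leq n-1$ and $(I_{\mathbf{g}}^{(n)}f)^{(n)}=\langle\mathbf{g},\mathcal{D}_n f\rangle$, so the standard equivalent norm on $A^q$ in terms of the $n$-th derivative yields
\[
\|I_{\mathbf{g}}^{(n)}f\|_{A^q}^{q}\asymp\int_{\mathbb{D}}\bigl|\langle\mathbf{g}(z),(\mathcal{D}_n f)(z)\rangle\bigr|^{q}(1-|z|^2)^{nq}\,dA(z).
\]
With $d\mu(z):=(1-|z|^2)^{nq}\,dA(z)$, this identifies boundedness (and, via the standard sequential criterion for bounded sequences in $A^p$ that tend to $0$ uniformly on compacta, compactness) of $I_{\mathbf{g}}^{(n)}:A^p\to A^q$ with $\mu$ being a (respectively vanishing) $(\mathbf{g},p,q)$-Sobolev Carleson measure. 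Theorem \ref{Theorem1.2} then immediately gives $(a)\Leftrightarrow(b)$ and reduces both to the statement that each $d\nu_k:=|g_k|^q(1-|z|^2)^{nq}\,dA$, $0\leq k\leq n-1$, is a $(k,p,q)$-Carleson measure for $A^p$.

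The remaining step is to show that, for each $k$, the measure $\nu_k$ is a $(k,p,q)$-Carleson measure for $A^p$ precisely when $(c)$ holds. The tool here is Luecking's characterization in the off-diagonal case $0<q<p$: writing $\widehat{\nu}_k^{r}(z):=\nu_k(D(z,r))$ for a fixed pseudohyperbolic radius $r\in(0,1)$, the measure $\nu_k$ is a $(k,p,q)$-Carleson measure iff
\[
\int_{\mathbb{D}}\left(\frac{\widehat{\nu}_k^{r}(z)}{(1-|z|^2)^{2q/p+kq}}\right)^{p/(p-q)}\frac{dA(z)}{(1-|z|^2)^{2}}<\infty.
\]
The subharmonicity of $|g_k|^{q}$ combined with the comparability of pseudohyperbolic and Euclidean disks at scale $r$ gives $\widehat{\nu}_k^{r}(z)\asymp|g_k(z)|^{q}(1-|z|^2)^{nq+2}$, and a routine bookkeeping of the exponents reduces the displayed integral to a constant multiple of $\int_{\mathbb{D}}|g_k(z)|^{pq/(p-q)}(1-|z|^2)^{(n-k)pq/(p-q)}\,dA(z)$, which is exactly $(c)$.

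The main technical hurdle is this last step, namely quoting or re-establishing Luecking's $k$-th derivative Carleson characterization with the correct normalization $(1-|z|^2)^{2q/p+kq}$ and verifying the averaging estimate on $\widehat{\nu}_k^{r}$. Both are standard but demand careful accounting of the exponents; once they are in place, the equivalence of $(a)$, $(b)$ and $(c)$ follows by the chain outlined above.
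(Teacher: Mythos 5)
Your overall strategy coincides with the paper's for the equivalence $(a)\Leftrightarrow(b)$ and for the implication $(a)\Rightarrow(c)$: reduce to the statement that $d\mu=(1-|z|^2)^{nq}dA$ is a $(\mathbf{g},p,q)$-Sobolev Carleson measure, apply Theorem \ref{Theorem1.2} to split it into the measures $d\nu_k=|g_k|^q(1-|z|^2)^{nq}dA$, invoke Lemma \ref{lemma2.4}(ii), and use subharmonicity of $|g_k|^q$ together with \eqref{equa2.3} to get $\widehat{\nu}_k^r(z)\gtrsim |g_k(z)|^q(1-|z|^2)^{nq+2}$. (Your normalization $(1-|z|^2)^{2q/p+kq}$ with the extra weight $(1-|z|^2)^{-2}dA$ is the same condition as the paper's $(1-|z|^2)^{2+kq}$ with $dA$; the exponents match after expanding.)

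The gap is in the direction $(c)\Rightarrow(a)$, where you assert the two-sided comparison $\widehat{\nu}_k^{r}(z)\asymp|g_k(z)|^{q}(1-|z|^2)^{nq+2}$. The inequality $\lesssim$ is false pointwise: if $g_k(z)=0$ but $g_k$ does not vanish identically on $D(z,r)$, the left side is positive while the right side is zero; subharmonicity only gives the lower bound on the average, never the upper bound. The implication can still be rescued without a pointwise estimate: write $s=\frac{p}{p-q}>1$ and $h=|g_k|^q(1-|\cdot|^2)^{nq}$, apply Jensen's inequality to the average $\frac{1}{|D(z,r)|}\int_{D(z,r)}h\,dA$ to bound $(\widehat{\nu}_k^r(z))^s\lesssim (1-|z|^2)^{2(s-1)}\int_{D(z,r)}h^s\,dA$, and then use Fubini and \eqref{equa2.3} to land on $\int_{\mathbb{D}}|g_k|^{qs}(1-|z|^2)^{(n-k)qs}dA$, which is exactly $(c)$. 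The paper sidesteps this entirely: for $(c)\Rightarrow(a)$ it does not pass through the Carleson-measure characterization at all, but estimates $\|I_{\mathbf{g}}^{(n)}f\|_q^q$ directly via Lemma \ref{lemma2.1} and H\"older's inequality with exponents $\frac{p}{q}$ and $\frac{p}{p-q}$, using $\|(1-|z|^2)^kf^{(k)}\|_{L^p}\lesssim\|f\|_p$. Either repair works; as written, your averaging claim does not.
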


As direct corollaries, we provide the following characterizations of $I_{g,a}:A^p\to A^q$.

\begin{corollary}\label{Coro1.5}
Suppose $g\in H(\mathbb{D})$ and $a=(a_0,a_1,\cdots,a_{n-1})\in \mathbb{C}^n$. If $0<p\leq q<\infty$ and $\frac{2}{p}-\frac{2}{q}<1$, then
\begin{itemize}
\item[(a)] $I_{g,a}:A^p\to A^q$ is bounded if and only if $g\in\mathcal{B}^{1,1-\frac{2}{p}+\frac{2}{q}}$.
\item[(b)] $I_{g,a}:A^p\to A^q$ is compact if and only if $g\in\mathcal{B}_0^{1,1-\frac{2}{p}+\frac{2}{q}}$.
\end{itemize}
\end{corollary}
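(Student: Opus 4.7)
The plan is to deduce this corollary directly from Theorem \ref{Theorem1.3} by recognizing $I_{g,a}$ as an instance of $I_{\mathbf{g}}^{(n)}$ and then translating the resulting collection of Bloch-type conditions on derivatives of $g$ into a single condition on $g$ itself via the identity $\mathcal{B}^{0,\alpha}=\mathcal{B}^{m,m+\alpha}$.

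First I would write $I_{g,a}=I_{\mathbf{g}}^{(n)}$ with the vector $\mathbf{g}=(a_0g^{(n)},a_1g^{(n-1)},\dots,a_{n-1}g')$, i.e.\ $g_k=a_k g^{(n-k)}$ for $0\leq k\leq n-1$. For the nontrivial case assume that at least one $a_k\neq 0$ (otherwise $I_{g,a}\equiv 0$ and both assertions are vacuous). Next I would observe that the hypothesis $\frac{2}{p}-\frac{2}{q}<1$ guarantees
\[
n-k\geq 1>\tfrac{2}{p}-\tfrac{2}{q}\qquad\text{for every }0\leq k\leq n-1,
\]
so in Theorem \ref{Theorem1.3} only the nondegenerate alternative appears: boundedness is equivalent to $g_k=a_kg^{(n-k)}\in\mathcal{B}^{0,n-k-\frac{2}{p}+\frac{2}{q}}$ for every $k$, and compactness is equivalent to the corresponding little-oh statements.

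The heart of the argument is then the identification $\mathcal{B}^{0,\alpha}=\mathcal{B}^{m,m+\alpha}$ (and its little-oh analogue $\mathcal{B}_0^{0,\alpha}=\mathcal{B}_0^{m,m+\alpha}$) recalled in the excerpt. Applied to $h=g'$ and $m=n-k-1$ with $\alpha=1-\frac{2}{p}+\frac{2}{q}$, it yields
\[
g'\in\mathcal{B}^{0,1-\frac{2}{p}+\frac{2}{q}}\ \Longleftrightarrow\ g^{(n-k)}\in\mathcal{B}^{n-k-1,\,n-k-\frac{2}{p}+\frac{2}{q}}=\mathcal{B}^{0,\,n-k-\frac{2}{p}+\frac{2}{q}}
\]
simultaneously for all $1\leq n-k\leq n$. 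Hence the collection of $n$ conditions produced by Theorem \ref{Theorem1.3}(a) collapses into the single condition $g\in\mathcal{B}^{1,1-\frac{2}{p}+\frac{2}{q}}$, proving (a); the analogous collapse in the little-oh scale proves (b).

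The only place where one must be slightly careful is the role of the scalars $a_k$: if some but not all $a_k$ vanish, the conclusions of Theorem \ref{Theorem1.3} are directly imposed only on those $k$ with $a_k\neq 0$, but the identification above shows that any one of these Bloch conditions is equivalent to $g\in\mathcal{B}^{1,1-\frac{2}{p}+\frac{2}{q}}$, which in turn implies all the others. Thus no serious obstacle arises; the main (and only nontrivial) point to verify is the equivalence via $\mathcal{B}^{0,\alpha}=\mathcal{B}^{m,m+\alpha}$, which is a standard consequence of integration and the Schwarz-Pick type estimates on $\mathbb{D}$.
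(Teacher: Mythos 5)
Your proposal is correct and follows essentially the same route as the paper: substitute $g_k=a_kg^{(n-k)}$ into Theorem \ref{Theorem1.3}, note that $\frac{2}{p}-\frac{2}{q}<1\leq n-k$ rules out the degenerate alternative, and collapse the resulting family of conditions into $g\in\mathcal{B}^{1,1-\frac{2}{p}+\frac{2}{q}}$ via the identity $\mathcal{B}^{0,\alpha}=\mathcal{B}^{m,m+\alpha}$. If anything, you are more careful than the paper about the role of the scalars $a_k$ (the paper silently drops them), which is a welcome refinement rather than a deviation.
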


\begin{corollary}\label{Coro1.6}
Suppose $g\in H(\mathbb{D})$ and $a=(a_0,a_1,\cdots,a_{n-1})\in\mathbb{C}^n$. If $0<q<p<\infty$, then the following conditions are equivalent:
\begin{itemize}
\item[(a)] $I_{g,a}:A^p\to A^q$ is bounded;
\item[(b)] $I_{g,a}:A^p\to A^q$ is compact;
\item[(c)] $g\in A^{\frac{pq}{p-q}}$.
\end{itemize}
\end{corollary}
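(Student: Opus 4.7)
The plan is to derive Corollary \ref{Coro1.6} as a direct specialization of Theorem \ref{Theorem1.4}. Recall that $I_{g,a}$ is the same as $I_{\mathbf{g}}^{(n)}$ with the vector-valued symbol $\mathbf{g}=(g_0,\dots,g_{n-1})$ given by $g_k=a_k g^{(n-k)}$ for $0\le k\le n-1$. Applying Theorem \ref{Theorem1.4} with this choice reduces the equivalence of boundedness and compactness to the single integrability condition
\begin{equation*}
\int_{\mathbb{D}} |a_k|^{s}\,|g^{(n-k)}(z)|^{s}\,(1-|z|^2)^{(n-k)s}\, dA(z) < \infty, \qquad 0\le k\le n-1,
\end{equation*}
where $s=\tfrac{pq}{p-q}$. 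Thus the first two implications (a) $\Leftrightarrow$ (b) are free, and what remains is to identify this family of conditions with the single statement $g\in A^{s}$.

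Next I would eliminate the redundant indices. After the change of variable $j=n-k$, the condition for those $k$ with $a_k\neq 0$ reads
\begin{equation*}
\int_{\mathbb{D}} |g^{(j)}(z)|^{s}\,(1-|z|^2)^{js}\, dA(z) < \infty.
\end{equation*}
At this point I would invoke the standard derivative characterization of Bergman spaces: for any positive integer $j$ and any $s>0$,
\begin{equation*}
g\in A^{s} \quad\Longleftrightarrow\quad \int_{\mathbb{D}} |g^{(j)}(z)|^{s}\,(1-|z|^2)^{js}\, dA(z) < \infty,
\end{equation*}
a classical fact for which I would cite Zhu's monograph. Consequently the existence of even one index $k$ with $a_k\neq 0$ satisfying Theorem \ref{Theorem1.4}(c) already forces $g\in A^{s}$, and conversely $g\in A^{s}$ forces the integrability for every $j$, hence for every $k$. (The degenerate case $a=0$ makes the operator identically zero and is trivial; it is implicitly excluded by nontriviality.)

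There is no real obstacle here, since Theorem \ref{Theorem1.4} has already absorbed all the hard work via the Sobolev--Carleson measure theory of Theorems \ref{Theorem1.1} and \ref{Theorem1.2}; the only ingredient to import is the equivalent norm on $A^s$ expressed through a single derivative of order $j\ge 1$. The mildest care concerns the different orders $j$ appearing simultaneously, but since each of them individually characterizes membership in $A^s$, no compatibility issue arises: the family of conditions collapses to the single condition $g\in A^{\frac{pq}{p-q}}$.
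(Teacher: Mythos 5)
Your proposal is correct and follows essentially the same route as the paper: specialize Theorem \ref{Theorem1.4} to $g_k=a_kg^{(n-k)}$ and then identify each integral condition with $g\in A^{\frac{pq}{p-q}}$ via the Littlewood--Paley characterization (Lemma \ref{lemma2.1}). Your explicit handling of the coefficients $a_k$ and of the degenerate case $a=0$ is in fact slightly more careful than the paper's own one-line argument.
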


\subsection{Linear differential equation}

In recent years, the research of the non-homogeneous linear complex differential equation 
\begin{equation}\label{equa1.1}
f^{(n)}+g_{n-1}(z)f^{(n-1)}+\cdots+g_0(z)f=F(z)
\end{equation}
in analytic function spaces has been widely concerned, where $F$ and $g_i$, $i=0,1,\cdots,n-1$ are analytic functions in some domains of $\mathbb{C}$. In \cite{Pc}, Pommerenke found a condition for the solution of the equation $f^{''}(z)+g(z)f(z)=0$. For the case of \eqref{equa1.1} with solutions in $H^p$, one can refer to \cite{Rj}. For the case of \eqref{equa1.1} with solutions in other function spaces, one can refer to \cite{GjHjRj, LhWl, PjRj, SyLjHg}. Inspired by \cite{Cn}, we get a sufficient condition for the solution of (1.1) belonging to the Bergman space $A^p$.

\begin{theorem}\label{Theorem1.7}
Suppose $F\in H(\mathbb{D})$ satisfies $I^nF\in A^p$ and $g_k\in \mathcal{B}^{0,n-k}$ for all $0\leq k\leq n-1$. Then there exists an $A>0$, depending only on $p$, such that every solution of the linear differential equation (1.1)
belongs to $A^p$ whenever $\|g_k\|_{0,n-k}<A$ for all $0\leq k\leq n-1$. Furthermore, if $g_k\in \mathcal{B}_{0}^{0,n-k}$ for all $0\leq k\leq n-1$, the same result holds without any restriction on the norm of $g_k$.
\end{theorem}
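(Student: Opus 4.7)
The plan is to reduce the differential equation to a functional equation on $A^p$. Let $f\in H(\mathbb{D})$ be any solution of (1.1) and write $P_f(z)=\sum_{j=0}^{n-1}\frac{f^{(j)}(0)}{j!}z^{j}$ for its Taylor polynomial of order $n-1$ at the origin. Applying $I^n$ to both sides of (1.1) and using $I^n f^{(n)}=f-P_f$ together with the definition of $I_{\mathbf g}^{(n)}$, the equation becomes
\begin{equation*}
(I+I_{\mathbf g}^{(n)})f=P_f+I^n F,
\end{equation*}
whose right-hand side lies in $A^p$ because $P_f$ is a polynomial and $I^nF\in A^p$ by hypothesis.

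I would then invoke Theorem \ref{Theorem1.3} with $p=q$: since $n-k-\frac{2}{p}+\frac{2}{p}=n-k$, the assumption $g_k\in\mathcal{B}^{0,n-k}$ makes $I_{\mathbf g}^{(n)}\colon A^p\to A^p$ bounded, and inspection of the proof of Theorem \ref{Theorem1.3}(a) should yield the quantitative estimate
\begin{equation*}
\|I_{\mathbf g}^{(n)}\|_{A^p\to A^p}\le C_p\sum_{k=0}^{n-1}\|g_k\|_{0,n-k}.
\end{equation*}
Choose $A=A(p)>0$ so small that the right-hand side is strictly less than $1$ whenever $\|g_k\|_{0,n-k}<A$ for every $k$; a Neumann series in the complete quasi-norm of $A^p$ then produces a unique $\tilde f\in A^p$ with $(I+I_{\mathbf g}^{(n)})\tilde f=P_f+I^n F$. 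Setting $h=f-\tilde f$, the identity $h+I_{\mathbf g}^{(n)}h=0$ forces $h$ to vanish to order $n-1$ at the origin (because $I_{\mathbf g}^{(n)}h=I^n(\sum_k g_k h^{(k)})$ does), and differentiating $n$ times recovers the homogeneous form of (1.1) for $h$; ODE uniqueness then forces $h\equiv 0$, so $f=\tilde f\in A^p$.

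In the vanishing case, Theorem \ref{Theorem1.3}(b) with $p=q$ gives that $I_{\mathbf g}^{(n)}$ is compact on $A^p$, so $I+I_{\mathbf g}^{(n)}$ is a compact perturbation of the identity; invertibility then reduces via the Fredholm alternative to triviality of the kernel, which is again the ODE uniqueness argument from the previous paragraph. With invertibility in hand, the conclusion $f\in A^p$ follows exactly as before, and no restriction on the norms of the $g_k$ is needed.

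The main obstacle I foresee is extracting the explicit operator-norm estimate used above: Theorem \ref{Theorem1.3} is only stated as an equivalence, so one must trace through the relevant direction of its proof (together with the Sobolev--Carleson characterization of Theorem \ref{Theorem1.1}) to verify that the operator-norm constant depends linearly on $\sum_k\|g_k\|_{0,n-k}$. A secondary point is that when $0<p<1$ both the Neumann series and the Fredholm alternative must be applied in the quasi-Banach $F$-space $A^p$; this is standard once one works with the $p$-subadditive quasi-norm, but should be noted explicitly.
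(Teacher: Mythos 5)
Your proposal is correct and follows essentially the same route as the paper: rewrite (1.1) as $(\mathrm{Id}+I_{\mathbf g}^{(n)})f=P_f+I^nF$ with right-hand side in $A^p$, invert $\mathrm{Id}+I_{\mathbf g}^{(n)}$ via the operator-norm bound $\|I_{\mathbf g}^{(n)}\|\lesssim\sum_k\|g_k\|_{0,n-k}$ from Theorem \ref{Theorem1.3} in the small-norm case and via compactness plus triviality of the kernel (ODE uniqueness) in the vanishing case, and conclude $f=\tilde f$ by uniqueness of the initial value problem. Your explicit handling of the quasi-Banach situation $0<p<1$ and of the vanishing of $h$ to order $n-1$ at the origin is slightly more careful than the paper's write-up, but the argument is the same.
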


\subsection{Sums of generalized weighted composition operator}

Let $\varphi$ be an analytic self-map of $\mathbb{D}$ and $u\in H(\mathbb{D})$, the weighted composition operator $W_{u,\varphi}$ on $H(\mathbb{D})$ is defined by 
$$W_{u,\varphi}f=u\cdot f\circ\varphi.$$
It is known that the weighted composition operator is closely related to the linear isometric between Hardy and Bergman spaces. See for example \cite{Ff,Kc}.
When $\varphi(z)=z$, it reduces to the multiplication operator $M_u$. And when $u=1$, it reduces to the composition operator $C_{\varphi}$. The relationship between the operator-theoretic properties of $C_{\varphi}$ and the function-theoretic properties of $\varphi$ has been studied extensively during the past several decades. We refer the readers to the monographs \cite{CcMb,Sj} for more details. And one can refer to \cite{Ah, CbCkKhYj,CzZr,CzZr1} for the study of weighted composition operators on Bergman spaces.

Let $D$ be the differentiation operator on $H(\mathbb{D})$, i.e. $Df=f'$. $D^k$ is the $k$th iterate of $D$. The product of the operators $W_{u,\varphi}$ and $D^k$ is called the generalized weighted composition operator or weighted composition-differentiation operator of order $k$, denoted by $W_{u,\varphi}^{(k)}$, i.e.
\begin{equation*}
W_{u,\varphi}^{(k)}f=W_{u,\varphi}D^kf=u\cdot f^{(k)}\circ\varphi.
\end{equation*}
It is clear that $W_{u,\varphi}$ is a special case when $k=0$. When $k\geq 1$, the boundedness, compactness and Hilbert-Schmidtness of $W_{u,\varphi}^{(k)}$ between different weighted Bergman spaces were characterized in \cite{Zx,Zx1}. In addition, the differentiation-composition operator of order $k$ is denoted by $D_{\varphi}^{(k)}$, i.e. 
$$D_{\varphi}^{(k)}=D^kC_{\varphi}f=(f\circ\varphi)^{(k)}.$$
It is difficult to investigate the properties of $D_{\varphi}^{(k)}$ when $k\geq 2$.

In order to treat the product of the operators $M_u$, $C_{\varphi}$ and $D$ in a unified manner. Stevi\'c \cite{SsSaBa} et al. introduced the so-called Stevi\'s-Sharma operator $T_{u,v,\varphi}=W_{u,\varphi}+W_{v,\varphi}^{(1)}$ on weighted Bergman spaces. See also \cite{WsWmGx,YyLy,ZfLy} for more investigations about Stevi\'c-Sharma operators on several specific analytic function spaces.

Let $\mathbf{u}=(u_0,u_1,\cdots,u_n)$ with $u_k\in H(\mathbb{D})$ for $0\leq k\leq n$. Denote the sum operator $L_{\mathbf{u},\varphi}^{(n)}$ by 
$$L_{\mathbf{u},\varphi}^{(n)}=\sum_{k=0}^nW_{u_k,\varphi}^{(k)},$$
here $W_{u_0,\varphi}^{(0)}=W_{u_0,\varphi}$. It is clear that $T_{u,v,\varphi}$ is a special case of $L_{\mathbf{u},\varphi}^{(n)}$ when $n=1$, $u_0=u$ and $u_1=v$.

Just like the Sobolev Carleson measures, in this paper, we could also prove the rigidity of the operator $L_{\mathbf{u},\varphi}^{(n)}$. Specifically, the boundedness, compactness and Hilbert-Schmidtness of $L_{\mathbf{u},\varphi}^{(n)}$ are all equal to those of $W_{u_k,\varphi}^{(k)}$ for every $0\leq k\leq n$.
 
\begin{theorem}\label{Theorem1.8}
Let $0<p,q<\infty$, then the sum operator $L_{\mathbf{u},\varphi}^{(n)}:A^p\to A^q$ is bounded (or compact respectively) if and only if  $W_{u_k,\varphi}^{(k)}:A^p\to A^q$ is bounded (or compact respectively) for each $k=0,1,\cdots,n$. 
\end{theorem}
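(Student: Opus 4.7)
The plan is to treat the two directions of each equivalence separately; the nontrivial direction exploits the rigidity of Sobolev Carleson measures from Theorems \ref{Theorem1.1} and \ref{Theorem1.2}. The ``if'' direction is immediate: by the triangle inequality in $A^q$, $\|L_{\mathbf{u},\varphi}^{(n)} f\|_q \leq \sum_{k=0}^n \|W_{u_k,\varphi}^{(k)} f\|_q$, so boundedness (resp.\ compactness) of every summand gives boundedness (resp.\ compactness) of the finite sum.

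For the ``only if'' direction, assume $L_{\mathbf{u},\varphi}^{(n)}: A^p \to A^q$ is bounded. By a standard change of variable, $W_{u_k,\varphi}^{(k)}: A^p \to A^q$ is bounded precisely when the pushforward measure $\nu_k$ given by $\int g\,d\nu_k = \int_{\mathbb{D}} |u_k(z)|^q\, g(\varphi(z))\, dA(z)$ is a $(k,p,q)$-Carleson measure, and compactness corresponds to the vanishing version. I would extract these individual Carleson conditions by testing the global inequality $\int |\sum_k u_k(z) f^{(k)}(\varphi(z))|^q dA(z) \leq C\|f\|_p^q$ on a carefully chosen, uniformly $A^p$-bounded family of atoms.

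Concretely, for $\xi\in\mathbb{D}$ and $n+1$ distinct exponents $\gamma_0,\dots,\gamma_n > 2/p$, set $f_{\xi,\gamma_j}(z)=(1-|\xi|^2)^{\gamma_j-2/p}/(1-\bar\xi z)^{\gamma_j}$. These are uniformly bounded in $A^p$, with $k$-th derivatives $f_{\xi,\gamma_j}^{(k)}(\xi)=(\gamma_j)_k\,\bar\xi^k(1-|\xi|^2)^{-2/p-k}$, where $(\gamma_j)_k=\gamma_j(\gamma_j+1)\cdots(\gamma_j+k-1)$. The Vandermonde-type matrix $\bigl((\gamma_j)_k\bigr)_{j,k=0}^n$ being invertible, I can choose coefficients $\lambda_j^{(m)}$ with $\sum_j \lambda_j^{(m)}(\gamma_j)_k = \delta_{km}$, so that $h_{\xi,m}:=\sum_j \lambda_j^{(m)} f_{\xi,\gamma_j}$ is uniformly bounded in $A^p$ and isolates the $m$-th derivative at $\xi$: $h_{\xi,m}^{(k)}(\xi)=\delta_{km}\,\bar\xi^m(1-|\xi|^2)^{-2/p-m}$. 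A Taylor expansion on the pseudohyperbolic disk $D(\xi,r)$ quantifies this to $|h_{\xi,m}^{(k)}(w)|\lesssim r(1-|\xi|^2)^{-2/p-k}$ for $k\neq m$, while $|h_{\xi,m}^{(m)}(w)|\asymp (1-|\xi|^2)^{-2/p-m}$. Proceeding by descending induction on $n$, I first separate $W_{u_n,\varphi}^{(n)}$ using $h_{\xi,n}$: all cross-term scalings $(1-|\xi|^2)^{-2/p-k}$ with $k<n$ are dominated by the main scaling $(1-|\xi|^2)^{-2/p-n}$, so applying $L_{\mathbf{u},\varphi}^{(n)}$ to $h_{\xi,n}$ and integrating over $\varphi^{-1}(D(\xi,r))$ yields
$$\nu_n(D(\xi,r)) = \int_{\varphi^{-1}(D(\xi,r))}|u_n(z)|^q\,dA(z)\; \lesssim\; (1-|\xi|^2)^{nq+2q/p},$$
the $(n,p,q)$-Carleson condition, which characterizes boundedness of $W_{u_n,\varphi}^{(n)}$. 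Subtracting, $L_{\mathbf{u},\varphi}^{(n)} - W_{u_n,\varphi}^{(n)} = L_{(u_0,\dots,u_{n-1}),\varphi}^{(n-1)}$ is then bounded, and the induction hypothesis handles the lower-order terms. For compactness I would replay the argument along sequences $|\xi_j|\to 1$, using that $h_{\xi_j,m}\to 0$ on compact subsets together with Theorems \ref{Theorem1.1}(b) and \ref{Theorem1.2} to obtain the vanishing Carleson conditions.

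The main obstacle is the cross-term estimate: the isolation of the $m$-th derivative is only approximate, and cross terms with $k>m$ have strictly more singular $(1-|\xi|^2)$-scaling than the $m$-th, so one cannot directly extract $W_{u_m,\varphi}^{(m)}$ for $m<n$ in a single step. The descending induction on $n$ is exactly what circumvents this: by peeling off the most singular summand first, the residual problem strictly reduces in complexity, mirroring the layer-by-layer strategy underlying Theorems \ref{Theorem1.1} and \ref{Theorem1.2}.
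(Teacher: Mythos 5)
Your overall architecture for $p\le q$ — reduce to Carleson conditions for the pull-back measures $\nu_k=\mu_{u_k,\varphi}$, test on kernel-type atoms, extract the top-order coefficient first, then induct downwards — matches the paper's, but the key extraction step has a genuine gap. When you test $L_{\mathbf{u},\varphi}^{(n)}$ on $h_{\xi,n}$ and restrict the integral to $\varphi^{-1}(D(\xi,r))$, the cross terms are $u_k(z)h_{\xi,n}^{(k)}(\varphi(z))$ with $k<n$; your Taylor estimate bounds $|h_{\xi,n}^{(k)}(\varphi(z))|$ by $r(1-|\xi|^2)^{-k-2/p}$, but this factor multiplies $|u_k(z)|$, not $|u_n(z)|$. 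Saying the scaling $(1-|\xi|^2)^{-k-2/p}$ is ``dominated by'' $(1-|\xi|^2)^{-n-2/p}$ does not let you discard these terms: after integration they contribute (say for $q\le1$) $r^q(1-|\xi|^2)^{-(k+2/p)q}\nu_k(D(\xi,r))$, and nothing is yet known about $\nu_k(D(\xi,r))$ — controlling it is precisely the conclusion for index $k$. The descending induction does not break this circularity, since already the first step (isolating $u_n$) needs control of all lower-order $\nu_k$. Two repairs exist. The paper's: use the test functions $K_w^{[i]}(z)=z^i/(1-\overline{w}z)^{\gamma+i}$ and take an \emph{exact} pointwise linear combination of the resulting integrands (the Gaussian elimination of Proposition \ref{prop2.6} and Lemma \ref{lemma2.7}; for $n=1$ the combination $|\varphi|\cdot$(first estimate)$+$(second estimate)), which cancels the lower-order coefficients identically and leaves no error term. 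Alternatively, your approximate scheme closes if you test with all $h_{\xi,m}$, $0\le m\le n$, simultaneously, set $y_m=(1-|\xi|^2)^{-(m+2/p)q}\nu_m(D(\xi,r))$, derive the system $y_m\le C+Cr^q\sum_{k\ne m}y_k$, check a priori that each $y_m<\infty$ (apply $L_{\mathbf{u},\varphi}^{(n)}$ to polynomials to get $u_k\in A^q$), and absorb for $r$ small. Either way, the step needs an argument you have not supplied.

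The second gap is the range $0<q<p$, which your proposal does not treat. There the single-atom estimates only yield $\sup_{\xi}(1-|\xi|^2)^{-(n+2/p)q}\nu_n(D(\xi,r))<\infty$, which by Lemma \ref{lemma2.4}(ii) is necessary but not sufficient: one needs $\nu_k(D(z,r))/(1-|z|^2)^{2+kq}\in L^{p/(p-q)}(\mathbb{D},dA)$. The paper obtains this with the lattice test functions $h_k(z)=\sum_j\lambda_jz^k(1-|a_j|^2)^{\gamma+k-2/p}(1-\overline{a_j}z)^{-\gamma-k}$ and Khinchine's inequality, as in the proof of Theorem \ref{Theorem1.2}. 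Nor can you invoke Theorems \ref{Theorem1.1} and \ref{Theorem1.2} as black boxes here: $\|L_{\mathbf{u},\varphi}^{(n)}f\|_q$ evaluates the derivatives at $\varphi(z)$ rather than at $z$, so it is not literally a Sobolev Carleson measure condition, and the whole elimination must be rerun for the pull-back measures. This case must be added.
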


\begin{theorem}\label{Theorem1.9}
The sum operator $L_{\mathbf{u},\varphi}^{(n)}:A^2\to A^2$ is Hilbert-Schmidt if and only if each $W_{u_k,\varphi}^{(k)}:A^2\to A^2$,$k=0,1,\cdots,n$, is Hilbert-Schmidt.
\end{theorem}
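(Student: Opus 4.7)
The plan is to express the Hilbert--Schmidt norms via the Bergman reproducing kernel and then reduce the theorem to a uniform Gram matrix estimate. Write $K_z(\zeta)=(1-\bar z\zeta)^{-2}$ and $K_z^{(k)}(\zeta)=\partial_{\bar z}^{k}K_z(\zeta)=\frac{(k+1)!\,\zeta^k}{(1-\bar z\zeta)^{k+2}}$. Since $f(z)=\langle f,K_z\rangle$ and $f^{(k)}(z)=\langle f,K_z^{(k)}\rangle$ for $f\in A^2$, a short computation gives $(W_{u_k,\varphi}^{(k)})^{*}K_z=\overline{u_k(z)}\,K_{\varphi(z)}^{(k)}$ and hence $(L_{\mathbf{u},\varphi}^{(n)})^{*}K_z=\sum_{k=0}^{n}\overline{u_k(z)}K_{\varphi(z)}^{(k)}$. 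Using the standard identity $\|T\|_{HS}^{2}=\int_{\mathbb{D}}\|T^{*}K_z\|_{A^2}^{2}\,dA(z)$, I obtain
\begin{equation*}
\|W_{u_k,\varphi}^{(k)}\|_{HS}^{2}=\int_{\mathbb{D}}|u_k(z)|^{2}\|K_{\varphi(z)}^{(k)}\|^{2}dA(z),\qquad\|L_{\mathbf{u},\varphi}^{(n)}\|_{HS}^{2}=\int_{\mathbb{D}}\Bigl\|\sum_{k=0}^{n}\overline{u_k(z)}K_{\varphi(z)}^{(k)}\Bigr\|^{2}dA(z).
\end{equation*}

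The sufficiency direction is immediate from the pointwise bound $\|\sum_k c_k K_w^{(k)}\|^{2}\le(n+1)\sum_k|c_k|^{2}\|K_w^{(k)}\|^{2}$. For the converse I will establish the reverse pointwise inequality
\begin{equation*}
\Bigl\|\sum_{k=0}^{n}c_k K_w^{(k)}\Bigr\|^{2}\ge c_0\sum_{k=0}^{n}|c_k|^{2}\|K_w^{(k)}\|^{2},\qquad w\in\mathbb{D},\ (c_k)\in\mathbb{C}^{n+1},
\end{equation*}
with a universal constant $c_0>0$; specializing to $c_k=\overline{u_k(z)}$, $w=\varphi(z)$ and integrating yields $c_0\sum_k\|W_{u_k,\varphi}^{(k)}\|_{HS}^{2}\le\|L_{\mathbf{u},\varphi}^{(n)}\|_{HS}^{2}$, completing the proof.

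The pointwise inequality amounts to a uniform lower bound on the smallest eigenvalue of the Gram matrix $\widetilde G(w)=\bigl[\langle\widetilde K_w^{(k)},\widetilde K_w^{(l)}\rangle\bigr]_{k,l=0}^{n}$ of the normalized vectors $\widetilde K_w^{(k)}=K_w^{(k)}/\|K_w^{(k)}\|$. Computing $\langle K_w^{(k)},K_w^{(l)}\rangle=(K_w^{(l)})^{(k)}(w)$ with Leibniz shows that each entry equals $e^{i(l-k)\arg w}$ times a real rational function of $r=|w|$; after conjugating by the diagonal unitary $\mathrm{diag}(e^{-ik\arg w})$, the spectrum of $\widetilde G(w)$ depends only on $r\in[0,1)$. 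At $r=0$ one has $\widetilde G=I$, since $\widetilde K_0^{(k)}=\sqrt{k+1}\,\zeta^{k}$ are orthonormal monomials. As $r\to 1^{-}$ the $j=0$ term dominates in the Leibniz expansion and the entries converge to $M_{kl}=\frac{(k+l+1)!}{\sqrt{(2k+1)!\,(2l+1)!}}$, which admits the integral representation $M=\int_{0}^{\infty}\mathbf{v}(x)\mathbf{v}(x)^{T}\,xe^{-x}\,dx$ with $\mathbf{v}(x)_{k}=x^{k}/\sqrt{(2k+1)!}$, and is therefore positive definite because $1,x,\dots,x^{n}$ are linearly independent. Continuity of $\widetilde G$ on the compactified interval $[0,1]$, together with its positive definiteness at every point of $[0,1]$ (in the interior by linear independence of the $K_w^{(k)}$, at the endpoint by the integral representation), forces $\inf_{r}\lambda_{\min}(\widetilde G(r))>0$, which is $c_0$.

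The main obstacle is this final step: the careful asymptotic analysis of $\widetilde G(r)$ as $r\to 1^{-}$ and the verification that the limit matrix $M$ is positive definite. Everything else is a routine reproducing-kernel manipulation, and the sufficiency direction is trivial, so all the work is concentrated in the uniform spectral bound for the normalized Gram matrix of the kernels $K_w^{(0)},\dots,K_w^{(n)}$.
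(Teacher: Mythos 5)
Your proof is correct, and although its skeleton matches the paper's (compute $(L_{\mathbf{u},\varphi}^{(n)})^{*}\mathcal{K}_z=\sum_k\overline{u_k(z)}\mathcal{K}_{\varphi(z)}^{[k]}$, express Hilbert--Schmidt norms as $\int_{\mathbb{D}}\|T^{*}\mathcal{K}_z\|_2^2\,dA(z)$, and reduce everything to the two-sided comparison $\bigl\|\sum_k c_k\mathcal{K}_w^{[k]}\bigr\|^2\simeq\sum_k|c_k|^2\|\mathcal{K}_w^{[k]}\|^2$), you establish the crucial lower bound by a genuinely different route. The paper's version is Proposition 2.6, valid in $A^p$ for every $0<p<\infty$: it tests the combination against the weighted point evaluations $f\mapsto(1-|w|^2)^{j+2/p}f^{(j)}(w)$, $j=0,\dots,n$, and runs an explicit Gaussian elimination on the resulting matrix, with the row operations justified by the combinatorial identity of Lemma 2.7. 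You instead exploit the Hilbert-space structure: reduce to a uniform lower bound on the smallest eigenvalue of the normalized Gram matrix, use rotational symmetry to make its spectrum a function of $r=|w|$ alone (the phases $e^{i(l-k)\arg w}$ are stripped off by a diagonal unitary), get positive definiteness on $[0,1)$ from linear independence of the functionals $f\mapsto f^{(k)}(w)$, identify the boundary limit $M_{kl}=(k+l+1)!/\sqrt{(2k+1)!\,(2l+1)!}$ (your Leibniz asymptotics are right: the $j=0$ term dominates by a full factor of $1-r^2$ per step), certify $M\succ0$ via the moment representation $\int_0^\infty\mathbf{v}(x)\mathbf{v}(x)^{T}xe^{-x}\,dx$, and finish by continuity on the compact interval $[0,1]$. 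Your argument buys a softer, computation-light proof with an explicit and rather pretty limit object, at the price of being confined to $p=2$ (which is all Theorem 1.9 needs) and of giving no explicit constant; the paper's elimination argument is heavier but works for all $p$ and is the engine behind Theorems 1.1 and 1.2 as well. The only points worth a sentence each are the justification of $\|T\|_{HS}^2=\int_{\mathbb{D}}\|T^{*}\mathcal{K}_z\|_2^2\,dA(z)$ in the extended sense (both sides may be $+\infty$; expand $\|T^{*}\mathcal{K}_z\|^2=\sum_n|(Te_n)(z)|^2$ and apply Tonelli) and the remark that the same identity, applied to a single $W_{u_k,\varphi}^{(k)}$, converts your pointwise bound directly into $\sum_k\|W_{u_k,\varphi}^{(k)}\|_{HS}^2\lesssim\|L_{\mathbf{u},\varphi}^{(n)}\|_{HS}^2$ --- both routine.
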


The paper is organized as follows. In Section 2 we briefly give the preliminaries and background information, and some interesting key lemmas. In Sections 3 to 6, we prove our main results respectively as they are given above. 

Throughout the paper, we write $A\lesssim B$ (or equivalently $B\gtrsim A$) if there exists an absolute constant $C>0$ such that $A\leq B$. As usual, $A\simeq B$ means $A\lesssim B$ and $B\lesssim A$. We will be more specific if the dependence of such constants on certain parameters becomes critical.

\section{Preliminaries}

In this section, we present some preliminary facts and auxiliary lemmas which will be needed in our paper.

The following result is well known as the Little-Paley formula for Bergman spaces. See \cite{Zk,Zk1} for more details.

\begin{lemma}\label{lemma2.1}
Let $n$ be a positive integer, $0<p<\infty$ and $f\in H(\mathbb{D})$. Then $f\in A^p$ if and only if the function $(1-|z|^2)^nf^{(n)}(z)$ is in $L^p(\mathbb{D},dA)$. Moreover, the norm of $f$ in $A^p$ is comparable to 
\begin{equation*}
|f(0)|+|f'(0)|+\cdots+|f^{(n-1)}(0)|+\|(1-|z|^2)^nf^{(n)}\|_{L^p}.
\end{equation*}
\end{lemma}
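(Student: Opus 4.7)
The plan is to establish Lemma \ref{lemma2.1} by induction on $n$, with the case $n=1$ providing the main technical input and the general case obtained by iteration through weighted Bergman norms.

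For the necessity direction (assuming $f \in A^p$), I would use Cauchy's integral formula for $f^{(n)}$ on the disk $B(z, r)$ with $r = (1-|z|)/2$, combined with the subharmonicity of $|f|^p$, to obtain the pointwise estimate
\begin{equation*}
|f^{(n)}(z)|^p (1-|z|^2)^{np} \lesssim \frac{1}{(1-|z|)^2} \int_{B(z,r)} |f(w)|^p \, dA(w).
\end{equation*}
Integrating in $z$ and applying Fubini, while noting that $1-|w| \simeq 1-|z|$ throughout $B(z,r)$ so that each $w$ is covered a bounded number of times, yields the desired bound by $\|f\|_{A^p}^p$. The pointwise values $|f^{(k)}(0)| \lesssim \|f\|_{A^p}$ are immediate from the mean-value property at the origin.

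The sufficiency direction is where the real work lies. I would first handle $n=1$: given $f(0)$ and assuming $(1-|z|^2) f' \in L^p(\mathbb{D},dA)$, conclude $f \in A^p$. Starting from $f(z) - f(0) = z \int_0^1 f'(tz)\, dt$, the range $p \geq 1$ follows by Minkowski's integral inequality after the substitution $w = tz$; the range $0 < p < 1$ requires the $p$-triangle inequality together with a dyadic decomposition of $t \in (0,1)$ to handle the loss of convexity. The general $n$ is then reduced to the weighted $n=1$ analogue
\begin{equation*}
\int_{\mathbb{D}} |g(z)|^p (1-|z|^2)^\alpha \, dA(z) \simeq |g(0)|^p + \int_{\mathbb{D}} |g'(z)|^p (1-|z|^2)^{p+\alpha} \, dA(z), \quad \alpha > -1,
\end{equation*}
which is proved by the same reasoning. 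Iterating this identity with $g = f^{(j)}$ and $\alpha = jp$ for $j = 0, 1, \ldots, n-1$ and collecting the initial data $f^{(j)}(0)$ at each step produces the claimed norm equivalence.

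I expect the main obstacle to be the weighted $n=1$ base case, specifically obtaining matching two-sided bounds (rather than a one-sided Hardy-type inequality) uniformly in $p \in (0,\infty)$ and $\alpha > -1$. The small-$p$ range is delicate because one loses the convexity underlying the Minkowski step, and compensating via the $p$-triangle inequality requires careful control of the resulting geometric series in the dyadic decomposition of $(0,1)$. Once the weighted $n=1$ case is established with sharp constants, the induction is mechanical.
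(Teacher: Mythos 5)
The paper does not actually prove this lemma: it is quoted as the classical Littlewood--Paley formula with a pointer to \cite{ZrZk,Zk1}, so there is no in-paper argument to compare against and your proposal must stand on its own. Your necessity direction (Cauchy's formula on $B(z,(1-|z|)/2)$ together with subharmonicity of $|f|^p$ and a bounded-overlap Fubini argument) is correct and standard, as is the reduction of general $n$ to the weighted $n=1$ equivalence applied iteratively to $g=f^{(j)}$ with $\alpha=jp$.

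The gap is in the sufficiency base case, and it is not confined to $0<p<1$. For $p\ge 1$, Minkowski's inequality applied to $f(z)-f(0)=z\int_0^1 f'(tz)\,dt$ requires $\int_0^1\bigl(\int_{\mathbb{D}}|zf'(tz)|^p\,dA(z)\bigr)^{1/p}dt\lesssim\|(1-|w|^2)f'\|_{L^p}$; after the substitution $w=tz$ the inner norm equals $t^{-1-2/p}\bigl(\int_{t\mathbb{D}}|w|^p|f'(w)|^p\,dA(w)\bigr)^{1/p}$, and choosing $|f'|^p$ with radial distribution comparable to $(1-s)^{-p}\bigl(\log\frac{e}{1-s}\bigr)^{-\beta}$ with $1<\beta\le p$ makes this $t$-integral diverge while $\|(1-|w|^2)f'\|_{L^p}$ stays finite. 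So the Minkowski step provably fails. What does work for $p\ge 1$ is to fix $\theta$, write $|f(re^{i\theta})|\le|f(0)|+\int_0^r|f'(se^{i\theta})|\,ds$, and apply the one-variable weighted Hardy inequality $\int_0^1\bigl(\int_0^r h\bigr)^p\,dr\lesssim\int_0^1 h(s)^p(1-s)^p\,ds$ before integrating in $\theta$; alternatively, represent $f$ minus its Taylor polynomial as a Forelli--Rudin-type integral against $(1-|w|^2)^nf^{(n)}(w)$ and invoke the $L^p$-boundedness of those operators. For $0<p<1$ the ``$p$-triangle inequality plus dyadic decomposition of $t$'' cannot close as described: Jensen goes the wrong way, so $\bigl(\int_{I_k}|f'|\bigr)^p$ is not controlled by $\int_{I_k}|f'|^p$. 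The standard repair bounds $\int_{I_k}|f'|$ by $|I_k|\sup_{I_k}|f'|$ and then controls the supremum by an area average of $|f'|^p$ over a box of comparable side via the sub-mean-value property --- that is, the two-dimensional subharmonicity you used in the necessity direction is also the indispensable ingredient here. As written, the sufficiency direction is not established for any $p$.
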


Based on Lemma 2.1, one could easily obtain the following estimate for derivatives of functions in Bergman spaces. See \cite{Zx} for example.

\begin{lemma}\label{lemma2.2}
Let $p>0$ and $i$ be a non-negative integer. Then
\begin{equation*}
|f^{(i)}(z)|\lesssim \frac{\|f\|_p}{(1-|z|^2)^{i+\frac{2}{p}}},
\end{equation*}
for all $f\in A^p$ and $z\in\mathbb{D}$.
\end{lemma}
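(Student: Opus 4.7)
The plan is to reduce the general case $i\geq 1$ to the subharmonic mean-value estimate for $i=0$, then pass to derivatives by Cauchy's integral formula. First I would establish the base case: since $|f|^p$ is subharmonic on $\mathbb{D}$, for any $z\in\mathbb{D}$ the disk $D(z,r)$ with $r=(1-|z|)/2$ is contained in $\mathbb{D}$, and the sub-mean-value property gives
$$|f(z)|^p\leq\frac{1}{\pi r^2}\int_{D(z,r)}|f(w)|^p\,dA(w)\leq\frac{\|f\|_p^p}{r^2}\simeq\frac{\|f\|_p^p}{(1-|z|^2)^2}.$$
Taking $p$-th roots yields $|f(z)|\lesssim \|f\|_p/(1-|z|^2)^{2/p}$, which is the lemma for $i=0$.

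For $i\geq 1$, I would apply Cauchy's integral formula on the circle $\gamma_z=\{w:|w-z|=r\}$ with the same $r=(1-|z|)/2$:
$$f^{(i)}(z)=\frac{i!}{2\pi \mathrm{i}}\oint_{\gamma_z}\frac{f(w)}{(w-z)^{i+1}}\,dw,$$
so that
$$|f^{(i)}(z)|\leq \frac{i!}{r^{i}}\sup_{w\in\gamma_z}|f(w)|.$$
For $w\in\gamma_z$, the triangle inequality gives $1-|w|\geq 1-|z|-r=r$ and $1-|w|\leq 1-|z|+r\leq 2(1-|z|)$, so $1-|w|^2\simeq 1-|z|^2$ uniformly. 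Plugging the already-established $i=0$ estimate into the supremum yields
$$|f^{(i)}(z)|\lesssim \frac{1}{(1-|z|^2)^i}\cdot\frac{\|f\|_p}{(1-|z|^2)^{2/p}}=\frac{\|f\|_p}{(1-|z|^2)^{i+2/p}},$$
which is the desired bound.

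Since both steps are standard, there is no serious obstacle; the only care needed is choosing $r$ so that (i) $D(z,r)\subset\mathbb{D}$ and (ii) $1-|w|^2\simeq 1-|z|^2$ on $\gamma_z$, both of which are satisfied by $r=(1-|z|)/2$. An alternative route would iterate Lemma \ref{lemma2.1} and integrate, but the subharmonicity plus Cauchy formula argument is the most direct and gives the sharp dependence on $|z|$.
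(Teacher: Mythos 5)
Your proof is correct. Note, though, that the paper does not actually write out a proof of this lemma: it states that the estimate follows ``based on Lemma \ref{lemma2.1}'' (the Littlewood--Paley formula) and cites \cite{Zx}. The route the paper gestures at is to observe that, by Lemma \ref{lemma2.1}, $(1-|z|^2)^i f^{(i)}(z)$ lies in $L^p$ with norm controlled by $\|f\|_p$, i.e.\ $f^{(i)}$ belongs to the weighted Bergman space with weight $(1-|z|^2)^{ip}$, and then to invoke the standard pointwise growth estimate for that weighted space, which produces the exponent $i+\frac{2}{p}$ directly. Your argument instead proves everything from scratch: subharmonicity of $|f|^p$ gives the unweighted case $i=0$, and Cauchy's integral formula on the circle of radius $r=(1-|z|)/2$ upgrades it to derivatives, using that $1-|w|^2\simeq 1-|z|^2$ on that circle. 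Both are standard; yours has the advantage of being self-contained (it does not need the full strength of the Littlewood--Paley equivalence, only elementary subharmonicity), at the cost of an implied constant that visibly depends on $i$ through the factor $i!/r^i$ --- which is harmless here since $i$ is fixed. The only cosmetic slip is the normalization $\frac{1}{\pi r^2}$ versus $\frac{1}{r^2}$ in the mean-value step (the paper's $dA$ is already normalized by $\pi$), but this only affects an absolute constant and does not touch the validity of the bound.
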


This lemma tells us that the linear point evaluation of $i$th order $f\mapsto f^{(i)}$ is bounded on $A^p$ for every $z\in\mathbb{D}$. In particular, by Riesz's representation theorem in Hilbert space theory, there exists a unique function $\mathcal{K}_z^{[i]}$ in $A^2$ such that 
$$f^{(i)}(z)=\langle f,\mathcal{K}_z^{[i]}\rangle_2,$$
for all $f\in A^2$. It is known that $\mathcal{K}_z(w)=\mathcal{K}_z^{[0]}(w)=\frac{1}{(1-\overline{z}w)^2}$ and $\mathcal{K}_z^{[i]}(w)=\frac{\partial^{i}\mathcal{K}_z}{\partial\overline{z}^i}(w)$ for $i\geq 1$. $\mathcal{K}_z^{[i]}$ is called the reproducing kernel function in $A^2$ at $z$ of order $i$.

For any $w\in\mathbb{D}$ and $i\in\mathbb{N}$, let 
$$K_w^{[i]}(z)=\frac{z^i}{(1-\overline{w}z)^{\gamma+i}},$$
where $\gamma>1+\frac{2}{p}$ is sufficiently large. Then we can get the estimates for the norm of $K_w^{[i]}$.

\begin{lemma}\label{lemma2.3}
Let $i\in\mathbb{N}$, then 
\begin{equation*}
\|K_w^{[i]}\|_p\simeq \frac{1}{(1-|w|^2)^{\gamma+i-\frac{2}{p}}},
\end{equation*}
for all $w\in\mathbb{D}$.
\end{lemma}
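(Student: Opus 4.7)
The plan is a direct integral estimate: expand the $p$-norm, treat the easy region $|w|\leq 1/2$ trivially, and combine a Forelli--Rudin type estimate (for the upper bound) with a restriction-to-a-small-disc argument (for the lower bound).

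Write
$$\|K_w^{[i]}\|_p^p = \int_{\mathbb{D}} \frac{|z|^{ip}}{|1-\overline{w}z|^{(\gamma+i)p}} \, dA(z).$$
For the upper bound, I would discard the factor $|z|^{ip} \leq 1$ and invoke the classical estimate
$$\int_{\mathbb{D}} \frac{dA(z)}{|1-\overline{w}z|^{s}} \simeq \frac{1}{(1-|w|^2)^{s-2}} \qquad (s>2),$$
which applies because the standing hypothesis $\gamma > 1 + 2/p$ already forces $(\gamma+i)p \geq \gamma p > 2 + p > 2$. Raising to the power $1/p$ gives $\|K_w^{[i]}\|_p \lesssim (1-|w|^2)^{-(\gamma+i-2/p)}$.

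For the matching lower bound I would split into cases. When $|w|\leq 1/2$, both sides of the claimed equivalence are trapped between positive constants depending only on $\gamma$, $i$, and $p$, so there is nothing to prove. When $|w|>1/2$, restrict the integral to the Euclidean disc $D(w,c(1-|w|))$ for a small fixed $c \in (0,1/2)$. A routine triangle-inequality computation shows that on this disc one has $|z|\geq |w|-c(1-|w|)$ bounded below by an absolute positive constant, $|1-\overline{w}z|\simeq 1-|w|^2$, and the normalized area is comparable to $(1-|w|^2)^2$. Plugging these three estimates into the integral yields
$$\|K_w^{[i]}\|_p^p \;\gtrsim\; \frac{(1-|w|^2)^2}{(1-|w|^2)^{(\gamma+i)p}} \;=\; \frac{1}{(1-|w|^2)^{(\gamma+i)p-2}},$$
and taking $p$-th roots closes the estimate.

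I do not anticipate a genuine obstacle. The only mildly subtle point is that the factor $|z|^{ip}$ in the numerator vanishes at the origin, which is precisely why the lower bound must be handled on the region $|w|>1/2$ rather than over all of $\mathbb{D}$ at once. The hypothesis $\gamma > 1 + 2/p$ is exactly what makes the ambient Forelli--Rudin integral converge in the relevant range, so no additional assumption on $p$, $i$, or $\gamma$ is needed.
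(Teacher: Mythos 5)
Your proposal is correct, and the upper bound is obtained exactly as in the paper (drop $|z|^{ip}\leq 1$ and apply the Forelli--Rudin estimate, which is \cite[Theorem 1.19]{Zk}). The lower bound, however, is where you genuinely diverge. The paper does not integrate at all: it applies the pointwise derivative bound of Lemma \ref{lemma2.2} to $f=K_w^{[i]}$ at the point $z=w$, giving $\|K_w^{[i]}\|_p\gtrsim (1-|w|^2)^{i+\frac{2}{p}}|(K_w^{[i]})^{(i)}(w)|$, and then observes that $(K_w^{[i]})^{(i)}(w)\geq (1-|w|^2)^{-(\gamma+i)}$ because every term in the Leibniz expansion of the $i$-th derivative is a nonnegative real at $z=w$ (the sign coherence of those terms is the one small point that needs checking there). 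You instead restrict the integral to the Euclidean disc $D(w,c(1-|w|))$, where $|1-\overline{w}z|\simeq 1-|w|^2$ and, for $|w|>1/2$, $|z|$ is bounded below; your case split at $|w|\leq 1/2$ is necessary and correctly handles the vanishing of $|z|^{ip}$ near the origin. The trade-off: the paper's route is shorter given that Lemma \ref{lemma2.2} is already in hand and reuses machinery that recurs throughout the paper (the same evaluation trick drives Proposition \ref{prop2.6}), while yours is more elementary and self-contained, needing no derivative computation and no reproducing-kernel-type evaluation bound. Both close the estimate; no gap.
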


\begin{proof}
On the one hand, by \cite[Theorem 1.19]{Zk}, we have
\begin{equation}\label{equa2.1}
\begin{split}
\|K_w^{[i]}\|_p^p=\int_{\mathbb{D}}\left|\frac{z^i}{(1-\overline{w}z)^{\gamma+i}}\right|^pdA(z)\lesssim \frac{1}{(1-|w|^2)^{(\gamma+i)p-2}}.
\end{split}
\end{equation}
On the other hand, by lemma \ref{lemma2.2}, we have
\begin{equation}\label{equa2.2}
\|K_w^{[i]}\|_p\geq C(1-|w|^2)^{i+\frac{2}{p}}|(K_w^{[i]})^{(i)}(w)|\geq \frac{1}{(1-|w|^2)^{\gamma+i-\frac{2}{p}}}.
\end{equation}
Thus, the desired inequality follows from \eqref{equa2.1} and \eqref{equa2.2} immediately.
\end{proof}

By Lemma 2.3, we obtain that $\|\mathcal{K}_z^{[i]}\|_2\simeq \frac{1}{(1-|z|^2)^{1+i}},$  for all $z\in\mathbb{D}$ and $i\in\mathbb{N}$.

For any $a\in\mathbb{D}$, let $\varphi_a(z)=\frac{a-z}{1-\overline{a}z}$ be the involution automorphism that exchanges $0$ and $a$. The pseudo-hyperbolic metric on $\mathbb{D}$ is defined by
\begin{equation*}
\rho(z,w)=|\varphi_z(w)|,\quad z,w\in\mathbb{D}.
\end{equation*}
And the Bergman metric is defined by the formula
\begin{equation*}
\beta(z,w)=\frac{1}{2}\log\frac{1+\rho(z,w)}{1-\rho(z,w)},\quad z,w\in\mathbb{D}.
\end{equation*} 
For any $z\in\mathbb{D}$ and $r>0$, write $D(z,r)=\{w\in\mathbb{D}:\beta(z,w)<r\}$ for the Bergman metric disk centered at $z$ with radius $r$. It is well known that
\begin{equation}\label{equa2.3}
(1-|z|^2)^2\simeq (1-|w|^2)^2\simeq |1-\overline{w}z|^2\simeq |D(z,r)|,
\end{equation}
for all $z\in \mathbb{D}$ and $w\in D(z,r)$. Here $r$ is any fixed radius and $|D(z,r)|$ denotes the area of $D(z,r)$, See \cite{Zk} for example.

The geometric characterization for $(k,p,q)$-Carleson measures has been known for some time (see \cite{Ld2} and \cite{ZrZk}) and we state it as follows.

\begin{lemma}\label{lemma2.4}
Let $0<p,q<\infty$ and $\mu$ be a positive Borel measure on $\mathbb{D}$.
\begin{itemize}
\item[(i)] If $0<p\leq q<\infty$, then $\mu$ is a $(k,p,q)$-Carleson measure if and only if 
\begin{equation*}
\sup_{a\in\mathbb{D}}\frac{\mu(D(a,r))^{\frac{1}{q}}}{(1-|a|^2)^{k+\frac{2}{p}}}<\infty,
\end{equation*}
for some (or any) $r>0$, and $\mu$ is a vanishing $(k,p,q)$-Carleson measure if and only if 
\begin{equation*}
\lim_{|a|\to 1}\frac{\mu(D(a,r))^{\frac{1}{q}}}{(1-|a|^2)^{k+\frac{2}{p}}}=0,
\end{equation*}
for some (or any) $r>0$.
\item[(ii)] If $0<q<p<\infty$, then $\mu$ is a $(k,p,q)$-Carleson measure if and only if $\mu$ is a vanishing $(k,p,q)$-Carleson measure if and only if 
\begin{equation*}
\frac{\mu(D(z,r))}{(1-|z|^2)^{2+kq}}\in L^{\frac{p}{p-q}}(\mathbb{D},dA),
\end{equation*}
for some (or any) $r>0$.
\end{itemize}
\end{lemma}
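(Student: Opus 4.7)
The plan is to treat both parts in tandem, using normalized power-kernel test functions for the necessity directions and a Bergman $r$-lattice decomposition for the sufficiency directions. Fix throughout a separated $r$-lattice $\{a_j\}\subset\mathbb{D}$ such that $\{D(a_j,r)\}$ covers $\mathbb{D}$ with uniformly bounded overlap, and for each $a\in\mathbb{D}$ set
$$f_a(z)=\frac{(1-|a|^2)^{\gamma-2/p}}{(1-\bar a z)^{\gamma}},\qquad \gamma>1+\frac{2}{p}.$$
Lemma \ref{lemma2.3} (with $i=0$) gives $\|f_a\|_p\simeq 1$ uniformly in $a$, while a direct differentiation combined with \eqref{equa2.3} yields $|f_a^{(k)}(z)|\simeq(1-|a|^2)^{-k-2/p}$ for $|a|$ bounded below and $z\in D(a,r)$. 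For the necessity direction in (i), plugging $f_a$ into the Carleson inequality produces the geometric supremum bound, and running the same test along a sequence with $|a_j|\to 1^{-}$ (which converges to $0$ uniformly on compacta) delivers the vanishing version.

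For the sufficiency direction in (i), I split $\int|f^{(k)}|^q\,d\mu=\sum_j\int_{D(a_j,r)}|f^{(k)}|^q\,d\mu$. On each piece, subharmonicity of $|f^{(k)}|^p$ on the slightly larger disk $D(a_j,2r)$, together with \eqref{equa2.3}, bounds $\sup_{D(a_j,r)}|f^{(k)}|^p$ by a weighted $p$-mean. The geometric hypothesis $\mu(D(a,r))\lesssim(1-|a|^2)^{(k+2/p)q}$ then converts each local integral into $\bigl(\int_{D(a_j,2r)}(1-|w|^2)^{kp}|f^{(k)}(w)|^p\,dA\bigr)^{q/p}$. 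Summing, applying the super-additivity inequality $\sum_j x_j^{q/p}\le(\sum_j x_j)^{q/p}$ (valid since $q/p\ge 1$), and invoking Lemma \ref{lemma2.1} gives the bound by $\|f\|_p^q$.

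For (ii), the same lattice discretization (using subharmonicity to replace pointwise values of $|f^{(k)}|^q$ by their averages) yields $\int|f^{(k)}|^q\,d\mu\lesssim\sum_j|f^{(k)}(a_j)|^q\mu(D(a_j,r))$. Inserting the factor $(1-|a_j|^2)^{kq+2q/p}$ and applying H\"older with conjugate exponents $(p/q,\,p/(p-q))$, the first sum is controlled by $\sum_j|f^{(k)}(a_j)|^p(1-|a_j|^2)^{kp+2}\lesssim\|f\|_p^p$ via Lemma \ref{lemma2.1}, while the second, after multiplying and dividing each term by $(1-|a_j|^2)^2\simeq|D(a_j,r)|$, becomes a Riemann sum comparable to $\int_{\mathbb{D}}\bigl(\mu(D(z,r))/(1-|z|^2)^{2+kq}\bigr)^{p/(p-q)}\,dA$. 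The reverse implication is obtained by testing $\mu$ against $f_a$ and combining the resulting discrete inequality with a Khinchine-type estimate applied to randomized sums $\sum_j\varepsilon_j f_{a_j}$ over the lattice; the equivalence with the vanishing notion is then immediate from dominated convergence in $L^{p/(p-q)}(dA)$ applied to bounded sequences converging to $0$ on compacta.

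The main obstacle is the two-sided passage between integrals against $d\mu$ and discrete lattice sums with constants uniform in the lattice radius: the forward direction from integrals to sums follows from subharmonic averaging, but the reverse identification as a Riemann sum, crucial for (ii), requires careful use of the separation and bounded overlap properties of the lattice. A secondary subtlety is matching exponents in part (i), where $q/p\ge 1$ forces the use of the super-additivity inequality rather than a H\"older/Minkowski estimate, and one must verify that the independence of the geometric condition from the particular choice of $r$ is preserved through the argument.
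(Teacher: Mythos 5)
The paper does not actually prove Lemma \ref{lemma2.4}: it is quoted as a known result with references to Luecking \cite{Ld2} and Zhao--Zhu \cite{ZrZk}, so there is no in-paper proof to compare against. Your outline reconstructs the standard Luecking-type argument, and part (i) together with the sufficiency half of part (ii) is essentially sound: the normalized kernels $f_a$ with $\|f_a\|_p\simeq 1$ give the necessity of the geometric condition (modulo the harmless degeneration of $|\bar a|^k$ near the origin, which you handle by restricting to $|a|$ bounded below and which requires the one-line observation $\mu(\mathbb{D})<\infty$ from testing a polynomial), and the lattice/subharmonicity decomposition with $\sum_j x_j^{q/p}\le(\sum_j x_j)^{q/p}$, respectively H\"older with exponents $(p/q,\,p/(p-q))$, gives the two sufficiency directions.

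The genuine gap is in the necessity direction of part (ii). Testing $\mu$ against a single $f_a$ only yields the supremum condition $\sup_a\mu(D(a,r))(1-|a|^2)^{-(k+2/p)q}<\infty$, which is strictly weaker than membership of $\mu(D(\cdot,r))(1-|\cdot|^2)^{-2-kq}$ in $L^{p/(p-q)}$; and the randomized sum $\sum_j\varepsilon_j f_{a_j}$ with unimodular signs alone is not an $A^p$ function, since it has infinitely many summands each of norm comparable to $1$. The correct argument requires weighted test functions $\sum_j\lambda_j r_j(t)f_{a_j}$ with $\{\lambda_j\}\in\ell^p$, whose uniform $A^p$-boundedness is exactly the content of the Amar-type Lemma \ref{lemma2.5}; Khinchine's inequality then produces the discrete estimate $\sum_j|\lambda_j|^q\mu(D(a_j,r))(1-|a_j|^2)^{-(k+2/p)q}\lesssim\bigl(\sum_j|\lambda_j|^p\bigr)^{q/p}$, and only the duality $(\ell^{p/q})^*=\ell^{p/(p-q)}$ upgrades this to the required summability, after which the Riemann-sum comparison converts the lattice sum into the integral condition. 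This is precisely the chain carried out in the paper's proof of Theorem \ref{Theorem1.2}, which you should mirror. A secondary soft spot: the equivalence with the vanishing notion in (ii) is not just dominated convergence; one needs to split off a compact part and control the boundary contribution by the tail of the convergent $\ell^{p/(p-q)}$ sum.
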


A sequence $\{a_j\}$ in $\mathbb{D}$ is called a $r$-lattice in the Bergman metric if the following conditions are satisfied:
\begin{itemize}
\item[(i)] $\cup_{j=1}^{\infty}D(a_j,r)=\mathbb{D}$,
\item[(ii)] $\{D(a_j,r/2)\}_{j=1}^{\infty}$ are pairwise disjoint.
\end{itemize}
For any $r>0$, the existence of $r$-lattice in the Bergman metric can be ensured by \cite[Lemma 4.8]{Zk1}. And with hypotheses $(\rm i)$ and $(\rm ii)$, it is easy to check that
\begin{itemize}
\item[(iii)] 
there exists a positive integer $N$ such that every point in $\mathbb{D}$ belongs to at most $N$ of the sets $D(a_j,4r)$.
\end{itemize}

The following lemma is given by Amar in \cite{Ae}. Interested readers can also refer to \cite{Ld2} for a short introduction to Amar's work.

\begin{lemma}\label{lemma2.5}
Suppose $0<p<\infty$ and $\{a_j\}$ is a $r$-lattice in the Bergman metric of $\mathbb{D}$. Define an operator $S$ on complex sequences by 
\begin{equation*}
S(\{\lambda_j\})(z)=\sum_{j}\lambda_j\frac{(1-|a_j|^2)^{\gamma-\frac{2}{p}}}{(1-\overline{a_j}z)^{\gamma}},
\end{equation*}
where $\gamma>1+\frac{2}{p}$. Then $S$ is bounded from $l^p$ to $L^p(\mathbb{D},dA)$.
\end{lemma}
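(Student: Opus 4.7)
The plan is to split the argument into two regimes according to whether $0<p\le 1$ or $p>1$, since different tools apply. In both cases the main driver is the classical Forelli-Rudin kernel estimate, but packaged differently.

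For $0<p\le 1$, I would use the inequality $|\sum_j c_j|^p\le \sum_j |c_j|^p$ (triangle inequality when $p=1$, $p$-subadditivity otherwise) pointwise in $z$, obtaining
\begin{equation*}
|S(\{\lambda_j\})(z)|^p\le \sum_j |\lambda_j|^p \frac{(1-|a_j|^2)^{\gamma p-2}}{|1-\overline{a_j}z|^{\gamma p}}.
\end{equation*}
Integrating in $z$, interchanging sum and integral by Tonelli, and invoking the Forelli-Rudin estimate $\int_\mathbb{D} |1-\overline{a_j}z|^{-\gamma p}\,dA(z)\simeq (1-|a_j|^2)^{2-\gamma p}$, which is valid because $\gamma>1+2/p$ forces $\gamma p>p+2>2$, the weights in $(1-|a_j|^2)$ cancel and yield $\|S(\{\lambda_j\})\|_{L^p}^p\lesssim \sum_j |\lambda_j|^p$.

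For $p>1$, where the $p$-subadditivity is no longer available, I would pass from sums to integrals using the lattice structure. I would choose a measurable partition $\{E_j\}$ of $\mathbb{D}$ with $D(a_j,r/2)\subseteq E_j\subseteq D(a_j,r)$ and set
\begin{equation*}
f(w)=\sum_j \frac{|\lambda_j|}{(1-|a_j|^2)^{2/p}}\,\chi_{E_j}(w).
\end{equation*}
From $|E_j|\simeq (1-|a_j|^2)^2$ I obtain $\|f\|_{L^p}^p \simeq \sum_j |\lambda_j|^p$. Using \eqref{equa2.3} to replace $a_j$ by any $w\in E_j$ inside the kernel and absorbing $|E_j|^{-1}\simeq (1-|w|^2)^{-2}$, each summand of $S\{\lambda_j\}(z)$ becomes an average over $E_j$, so that
\begin{equation*}
|S(\{\lambda_j\})(z)|\lesssim \int_\mathbb{D}\frac{f(w)(1-|w|^2)^{\gamma-2}}{|1-\overline{w}z|^\gamma}\,dA(w)=:Tf(z).
\end{equation*}
The task then reduces to showing $T:L^p(\mathbb{D},dA)\to L^p(\mathbb{D},dA)$ is bounded. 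I would apply Schur's test with test function $h(z)=(1-|z|^2)^{-\epsilon}$ and select $\epsilon>0$ with $\max(0,\,2-\gamma)<\epsilon p<1$ and $\epsilon p'<\gamma-1$; both Schur integrals are then evaluated directly via Forelli-Rudin and reproduce $h$ up to constants.

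The main obstacle is the case $p>1$: besides the bookkeeping of the pseudo-hyperbolic constants in the discretization step, one must ensure that the three exponent constraints on $\epsilon$ for Schur's test are jointly compatible, and this is precisely where the sharp hypothesis $\gamma>1+2/p$ is used in full strength (it gives $\gamma>1+1/p$, which is exactly what is needed to fit $\epsilon$ between the lower bound $\max(0,(2-\gamma)/p)$ and the upper bound $\min(1/p,(\gamma-1)/p')$). The case $0<p\le 1$, by contrast, reduces after one line of algebra to a single Forelli-Rudin integral.
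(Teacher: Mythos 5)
Your proof is correct, but note that the paper itself offers no proof of this lemma at all: it is quoted from Amar \cite{Ae}, with Luecking \cite{Ld2} cited for an exposition, so there is nothing internal to compare against. What you have written is essentially the standard argument from those sources, and the exponent bookkeeping checks out in both regimes: for $0<p\le 1$ the $p$-subadditivity reduces everything to $\int_{\mathbb{D}}|1-\overline{a_j}z|^{-\gamma p}dA(z)\simeq(1-|a_j|^2)^{2-\gamma p}$, which needs $\gamma p>2$, guaranteed by $\gamma>1+\frac{2}{p}$; for $p>1$ the window $\max\bigl(0,\tfrac{2-\gamma}{p}\bigr)<\epsilon<\min\bigl(\tfrac{1}{p},\tfrac{\gamma-1}{p'}\bigr)$ is nonempty precisely when $\gamma>1+\frac{1}{p}$, again implied by the hypothesis. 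Two small points of care. First, in the discretization you cite \eqref{equa2.3} to justify $|1-\overline{a_j}z|\simeq|1-\overline{w}z|$ for $w\in E_j$ \emph{uniformly in $z\in\mathbb{D}$}; but \eqref{equa2.3} only compares $|1-\overline{w}z|$ with $1-|z|^2$ when $w$ lies in $D(z,r)$, i.e.\ it is a two-point statement, whereas you need the three-point comparison (two Bergman-close points $a_j,w$ against an arbitrary third point $z$). That comparison is a standard lemma (it follows from the fact that $|1-\overline{u}v|^{1/2}$ is a quasi-metric equivalent on Bergman balls), but it should be invoked as such rather than as a consequence of \eqref{equa2.3}. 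Second, your closing remark that the hypothesis $\gamma>1+\frac{2}{p}$ is used ``in full strength'' is overstated by your own computation: the Schur step only requires $\gamma>1+\frac{1}{p}$ and the subadditive step only $\gamma>\frac{2}{p}$, so the stated hypothesis has slack; it is presumably chosen in the paper for uniformity with the kernel normalization $(1-|a_j|^2)^{\gamma-2/p}$ and the norm estimates of Lemma \ref{lemma2.3}. Neither point affects the validity of the proof.
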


We end this section with the following proposition, which illustrates the key idea throughout this paper. We believe that this result should be of some independent value and interest.

\begin{proposition}\label{prop2.6}
Let $\alpha_0,\alpha_1,\cdots,\alpha_n\in\mathbb{C}\backslash\{0\}$, then 
\begin{equation*}
\left\|\sum_{i=0}^n\alpha_iK_w^{[i]}\right\|_p\gtrsim\sum_{i=0}^n|\alpha_i|\|K_w^{[i]}\|_p
\end{equation*}
for all $w\in\mathbb{D}$.
\end{proposition}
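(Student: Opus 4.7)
The plan is to perform a Möbius change of variable that converts $F_w:=\sum_{i=0}^n\alpha_iK_w^{[i]}$ into a polynomial (times a controllable weight) on the unit disc, and then to exploit the equivalence of (quasi-)norms on the finite-dimensional space of polynomials of degree at most $n$, where all constants are uniform in $w$.

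First, I would introduce the disc automorphism $u=(z-w)/(1-\bar wz)$, whose inverse is $z=(u+w)/(1+u\bar w)$ and which satisfies $1-\bar wz=(1-|w|^2)/(1+u\bar w)$. A direct substitution yields
$$K_w^{[i]}(z)=\frac{(u+w)^i(1+u\bar w)^\gamma}{(1-|w|^2)^{\gamma+i}},$$
so that $F_w(z)=(1-|w|^2)^{-\gamma}(1+u\bar w)^\gamma Q(u)$, where
$Q(u)=\sum_{i=0}^n\beta_i(u+w)^i$
is a polynomial of degree $\le n$ with coefficients $\beta_i:=\alpha_i/(1-|w|^2)^i$. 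Using the Jacobian $|dz/du|^2=(1-|w|^2)^2/|1+u\bar w|^4$, the change of variable produces
$$\|F_w\|_p^p=(1-|w|^2)^{2-\gamma p}\int_{\mathbb{D}}|1+u\bar w|^{\gamma p-4}\,|Q(u)|^p\,dA(u).$$

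Next, I would restrict the integral to $|u|\le 1/2$. On that region, $|u\bar w|\le 1/2$, so $|1+u\bar w|\in[1/2,3/2]$ uniformly in $w\in\mathbb{D}$, and hence $|1+u\bar w|^{\gamma p-4}$ is bounded above and below by positive constants depending only on $p$ and $\gamma$. This gives
$$\|F_w\|_p^p\gtrsim(1-|w|^2)^{2-\gamma p}\int_{|u|\le 1/2}|Q(u)|^p\,dA(u).$$
Because $Q$ lives in the $(n+1)$-dimensional space of polynomials of degree $\le n$, all (quasi-)norms on that space are comparable; in particular $\int_{|u|\le 1/2}|Q(u)|^p\,dA(u)\simeq\sum_{k=0}^n|c_k|^p$ where $Q(u)=\sum_kc_ku^k$, with constants depending only on $n$ and $p$.

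Finally, I would invert the binomial relationship between $c_k$ and $\beta_i$. Expanding $(u+w)^i$ yields $c_k=\sum_{i=k}^n\binom{i}{k}w^{i-k}\beta_i$, which is an upper-triangular linear system with unit diagonal whose entries are bounded in modulus by $\binom{n}{k}$ since $|w|<1$. Consequently its inverse has entries bounded by constants depending only on $n$, giving $|\beta_j|^p\lesssim\sum_k|c_k|^p$ for each $j\in\{0,\dots,n\}$. Combining these estimates with Lemma \ref{lemma2.3} yields
$$\|F_w\|_p^p\gtrsim|\beta_j|^p(1-|w|^2)^{2-\gamma p}=|\alpha_j|^p(1-|w|^2)^{2-(\gamma+j)p}\simeq|\alpha_j|^p\,\|K_w^{[j]}\|_p^p$$
for each $j$, and summing over $j$ delivers the proposition. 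The main obstacle is simply identifying the right change of variable: once the sum of distinct rational kernels $K_w^{[i]}$ is rewritten as a single polynomial of bounded degree on a fixed disc, the rigidity becomes a routine finite-dimensional statement.
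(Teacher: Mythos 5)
Your proof is correct, and it takes a genuinely different route from the paper. The paper proves the proposition by testing the $A^p$-norm against the pointwise derivative bounds of Lemma \ref{lemma2.2} at the single point $z=w$: this produces the matrix $M_{n+1}(w)=\bigl((1-|w|^2)^{\frac{2}{p}+j}\alpha_i(K_w^{[i]})^{(j)}(w)\bigr)$, and the combinatorial identity of Lemma \ref{lemma2.7} is exactly what is needed to row-reduce it and isolate $\alpha_n\|K_w^{[n]}\|_p$, after which one inducts downward. You instead pull everything back by the automorphism $u=(z-w)/(1-\bar wz)$, which turns $\sum_i\alpha_iK_w^{[i]}$ into $(1-|w|^2)^{-\gamma}(1+u\bar w)^{\gamma}Q(u)$ with $Q$ a polynomial of degree at most $n$, restrict to $|u|\le 1/2$ where the weight $|1+u\bar w|^{\gamma p-4}$ is uniformly two-sided, and then invoke equivalence of (quasi-)norms on the $(n+1)$-dimensional space of such polynomials together with the triangular (Pascal-type) change of basis $c_k=\sum_{i\ge k}\binom{i}{k}w^{i-k}\beta_i$, whose inverse $\beta_j=\sum_{k\ge j}\binom{k}{j}(-w)^{k-j}c_k$ has entries bounded by $2^n$. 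All steps check out, including the case $0<p<1$ (the compactness argument for norm equivalence only needs continuity, homogeneity, and nonvanishing of $\int_{|u|\le 1/2}|Q|^p\,dA$ on nonzero polynomials). Your argument is shorter and more conceptual, and it dispenses with Lemma \ref{lemma2.7} entirely as far as Proposition \ref{prop2.6} is concerned. What the paper's heavier machinery buys is reusability: the same row-reduction scheme is applied in the proofs of Theorems \ref{Theorem1.1}, \ref{Theorem1.2} and \ref{Theorem1.8}, where the relevant quantity is an integral over $D(w,r)$ (or $\varphi^{-1}(D(w,r))$) against an arbitrary measure $\mu$; there your change-of-variable/finite-dimensionality argument breaks down (e.g.\ $\mu$ could be a point mass, so a nonzero polynomial can have zero ``norm''), and the elimination at the level of integrands is genuinely needed. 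So your proof is a clean replacement for Proposition \ref{prop2.6} itself (and hence for its use in Theorem \ref{Theorem1.9}), but not for the other occurrences of the technique.
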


Before the proof of this proposition, we need an ingenious lemma. Given integers $0\leq n\leq m$ and $\beta\in (0,+\infty)$, set
$$\Gamma_{\beta}^{n,m}:=\prod_{j=n}^{m}(\beta+i).$$

\begin{lemma}\label{lemma2.7}
For any $w\in\mathbb{D}$, let 
$$F_w(z)=\frac{f(z)}{(1-\overline{w}z)^{\beta+n}},$$
where $\beta>0$, $n\in\mathbb{N}$ and $f\in H(\mathbb{D})$, then
\begin{equation}\label{equa3.01}
\begin{split}
&(1-|w|^2)^{n+1}\left(F_w^{(n+1)}(w)-\frac{f^{(n+1)(w)}}{(1-|w|^2)^{\beta+n}}\right)\\
&\quad+\sum_{i=0}^{n}(-1)^{i+1}\binom{n+1}{i+1}\Gamma_{\beta}^{n-i,n}\overline{w}^{i+1}F_{w}^{(n-1)}(w)(1-|w|^2)^{n-i}=0.
\end{split}
\end{equation}
\end{lemma}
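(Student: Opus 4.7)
My plan is to derive the identity as a direct application of the generalized Leibniz rule to the factorization $f(z) = (1-\overline{w}z)^{\beta+n} F_w(z)$. First I would compute the derivatives of the weight, namely
$$\frac{d^k}{dz^k}(1-\overline{w}z)^{\beta+n} = (-\overline{w})^k\,(\beta+n)(\beta+n-1)\cdots(\beta+n-k+1)\,(1-\overline{w}z)^{\beta+n-k},$$
and observe that the falling product equals $\Gamma_\beta^{n-k+1,n}$ in the notation introduced just before the lemma.

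Next I would differentiate the factored form $n+1$ times in $z$ by the Leibniz rule and specialize at $z=w$, so that each factor $1-\overline{w}z$ collapses to $1-|w|^2$. Separating the $k=0$ term on the right produces
$$f^{(n+1)}(w) = (1-|w|^2)^{\beta+n}F_w^{(n+1)}(w) + \sum_{k=1}^{n+1}\binom{n+1}{k}(-\overline{w})^k \Gamma_\beta^{n-k+1,n}(1-|w|^2)^{\beta+n-k}F_w^{(n+1-k)}(w),$$
which is the key intermediate identity. The first term isolates the combination $F_w^{(n+1)}(w) - f^{(n+1)}(w)/(1-|w|^2)^{\beta+n}$ that appears in the statement.

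Finally, I would divide through by $(1-|w|^2)^{\beta+n}$, multiply by $(1-|w|^2)^{n+1}$ to normalize the weights, and reindex with $i = k-1$. This substitution sends $\overline{w}^k \mapsto \overline{w}^{i+1}$, $(1-|w|^2)^{n+1-k}\mapsto(1-|w|^2)^{n-i}$, $F_w^{(n+1-k)}\mapsto F_w^{(n-i)}$, and $\Gamma_\beta^{n-k+1,n}\mapsto\Gamma_\beta^{n-i,n}$; transposing the resulting sum to the left and tracking the sign turns $-(-\overline{w})^{i+1}$ into $(-1)^{i+1}\overline{w}^{i+1}$, yielding exactly the displayed identity. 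I note in passing that the factor $F_w^{(n-1)}(w)$ appearing in the stated formula is a typographical slip for $F_w^{(n-i)}(w)$, since otherwise the summand would incorrectly have no $i$-dependence in its derivative order.

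The argument is pure bookkeeping; there is no real obstacle, only the need to correctly match the falling product with the $\Gamma_\beta^{n-k+1,n}$ convention and to track the signs through the reindexing $i=k-1$.
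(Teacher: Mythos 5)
Your argument is correct, and it reaches the identity by a genuinely different route than the paper. The paper differentiates the quotient $F_w=f\cdot(1-\overline{w}z)^{-(\beta+n)}$ directly: it expands $F_w^{(n+1)}(w)$ and each $F_w^{(n-i)}(w)$ by the Leibniz rule, sorts the resulting terms into blocks $A_1+B_1+C_1$ and $A_2+B_2+C_2$, and reduces the claimed cancellation to a family of combinatorial identities $b_j=0$, which are then verified by the trick of writing $1=h_1(x)h_2(x)$ with $h_1=(1-x)^{-(\beta+n)}$, $h_2=(1-x)^{\beta+n}$ and differentiating. You instead apply Leibniz to the reversed factorization $f=(1-\overline{w}z)^{\beta+n}F_w$, which expresses $f^{(n+1)}(w)$ directly as a linear combination of $F_w^{(n+1-k)}(w)$; after dividing by $(1-|w|^2)^{\beta+n}$, normalizing by $(1-|w|^2)^{n+1}$, and reindexing $i=k-1$, the identity falls out with no combinatorial lemma at all. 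In effect, the paper's identity $g^{(n+1-j)}(0)=0$ for $g=h_1h_2\equiv 1$ is performing, in disguise, exactly the cancellation that your single Leibniz expansion achieves in one stroke, so your proof is both shorter and more transparent. Two housekeeping points: you are right that $F_w^{(n-1)}(w)$ in the statement is a typo for $F_w^{(n-i)}(w)$ (the paper's own proof uses $F_w^{(n-i)}$ throughout); and your parenthetical remark that transposing turns $-(-\overline{w})^{i+1}$ into $(-1)^{i+1}\overline{w}^{i+1}$ garbles one sign in the narration --- the correct bookkeeping is simply that moving the whole sum to the same side as $(1-|w|^2)^{n+1}\bigl(F_w^{(n+1)}(w)-f^{(n+1)}(w)/(1-|w|^2)^{\beta+n}\bigr)$ leaves the coefficient $(-\overline{w})^{i+1}=(-1)^{i+1}\overline{w}^{i+1}$ untouched, which is exactly what the statement displays --- but the final identity you arrive at is the right one.
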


\begin{proof}
By a direct calculation, we have
\begin{equation*}
\begin{split}
F_w^{(n+1)}(z)&=\frac{f^{(n+1)(z)}}{(1-\overline{w}z)^{\beta+n}}+\sum_{j=0}^{n}\binom{n+1}{j}f^{(j)}(z)\left(\frac{1}{(1-\overline{w}z)^{\beta+n}}\right)^{(n-j-1)}\\
&=\frac{f^{(n+1)(z)}}{(1-\overline{w}z)^{\beta+n}}+\sum_{j=0}^{n}\binom{n+1}{j}f^{(j)}(z)\frac{\Gamma_{\beta}^{n,2n-j}\overline{w}^{n-j-1}}{(1-\overline{w}z)^{\beta+2n-j+1}}.
\end{split}
\end{equation*}
Then we write 
\begin{equation*}
(1-|w|^2)^{n+1}\left(F_{w}^{(n+1)}(w)-\frac{f^{(n+1)}(w)}{(1-|w|^2)^{\beta+n}}\right)=A_1(w)+B_1(w)+C_1(w),
\end{equation*}
where 
$$A_1(w)=f(w)\frac{\Gamma_{\beta}^{n,2n}\overline{w}^{n+1}}{(1-|w|^2)^{\beta+n}},\quad B_1(w)=\frac{(n+1)(\beta+n)f^{(n)}(w)\overline{w}}{(1-|w|^2)^{\beta}}$$
and 
$$C_1(w)=\sum_{j=1}^{n-1}\binom{n+1}{j}f^{(j)}(w)\frac{\Gamma_{\beta}^{n,2n-j}\overline{w}^{n-j+1}}{(1-|w|^2)^{\beta+n-j}}.$$
Note that
\begin{equation*}
\begin{split}
&\quad\sum_{i=0}^n(-1)^{i+1}\binom{n+1}{i+1}\Gamma_{\beta}^{n-i,n}\overline{w}^{i+1}F_w^{(n-i)}(w)(1-|w|^2)^{n-i}\\
&=\sum_{i=0}^{n-1}(-1)^{i+1}\binom{n+1}{i+1}\Gamma_{\beta}^{n-i,n}\overline{w}^{i+1}F_w^{(n-i)}(w)(1-|w|^2)^{n-i}\\
&\qquad+(-1)^{n+1}\Gamma_{\beta}^{0,n}\overline{w}^{n+1}\frac{f(w)}{(1-|w|^2)^{\beta+n}}
\end{split}
\end{equation*}
and 
\begin{equation*}
F_w^{(n-i)}(z)=\frac{f^{(n-i)}(z)}{(1-\overline{w}z)^{\beta+n}}+\sum_{j=0}^{n-1-i}\binom{n-i}{j}f^{(j)}(z)\frac{\Gamma_{\beta}^{n,2n-i-j-1}\overline{w}^{n-i-j}}{(1-\overline{w}z)^{\beta+2n-i-j}}.
\end{equation*}
It follows that 
\begin{equation*}
\begin{split}
&\quad\sum_{i=0}^{n-1}(-1)^{i+1}\binom{n+1}{i+1}\Gamma_{\beta}^{n-i,n}\overline{w}^{i+1}F_{w}^{(n-i)}(w)(1-|w|^2)^{n-i}\\
&=\sum_{i=0}^{n-1}\sum_{j=0}^{n-1-i}\frac{(-1)^{i+1}\binom{n+1}{i+1}\binom{n-i}{j}\Gamma_{\beta}^{n-i,n}\Gamma_{\beta}^{n,2n-i-j-1}\overline{w}^{n-j+1}f^{(j)}(w)}{(1-|w|^2)^{\beta+n-j}}\\
&\qquad+\sum_{i=0}^{n-1}(-1)^{i+1}\binom{n+1}{i+1}\Gamma_{\beta}^{n-i,n}\overline{w}^{i+1}\frac{f^{(n-i)}(w)}{(1-|w|^2)^{\beta+i}}\\
&=\sum_{j=0}^{n-1}\sum_{i=0}^{n-1-j}\frac{(-1)^{i+1}\binom{n+1}{i+1}\binom{n-i}{j}\Gamma_{\beta}^{n-i,n}\Gamma_{\beta}^{n,2n-i-j-1}\overline{w}^{n-j+1}f^{(j)}(w)}{(1-|w|^2)^{\beta+n-j}}\\
&\qquad+\sum_{i=0}^{n-1}(-1)^{i+1}\binom{n+1}{i+1}\Gamma_{\beta}^{n-i,n}\overline{w}^{i+1}\frac{f^{(n-i)}(w)}{(1-|w|^2)^{\beta+i}}.
\end{split}
\end{equation*}
Now we can write
\begin{equation*}
\begin{split}
&\quad\sum_{i=0}^{n}(-1)^{i+1}\binom{n+1}{i+1}\Gamma_{\beta}^{n-i,n}\overline{w}^{i+1}F_w^{(n-i)}(w)(1-|w|^2)^{n-i}\\
&=A_2(w)+B_2(w)+C_2(w),
\end{split}
\end{equation*}
where
\begin{equation*}
A_2(w)=\left((-1)^{n+1}\Gamma_{\beta}^{0,n}+\sum_{i=0}^{n-1}(-1)^{i+1}\binom{n+1}{i+1}\Gamma_{\beta}^{n-i,n}\Gamma_{\beta}^{n,2n-i-1}\right)\frac{f(w)\overline{w}^{n+1}}{(1-|w|^2)^{\beta+n}},
\end{equation*}
$$B_2(w)=-(n+1)(\beta+n)\frac{f^{(n)}(w)\overline{w}}{(1-|w|^2)^{\beta}}$$
and
\begin{equation*}
\begin{split}
C_2(w)&=\sum_{j=1}^{n-1}\sum_{i=0}^{n-1-j}(-1)^{i+1}\binom{n+1}{i+1}\binom{n-i}{j}\Gamma_{\beta}^{n-i,n}\Gamma_{\beta}^{n,2n-i-j-1}\frac{f^{(j)}(w)\overline{w}^{n+1-j}}{(1-|w|^2)^{\beta+n-j}}\\
&\quad+\sum_{i=1}^{n-1}(-1)^{i+1}\binom{n+1}{i+1}\Gamma_{\beta}^{n-i,n}\frac{f^{(n-i)}(w)\overline{w}^{i+1}}{(1-|w|^2)^{\beta+i}}\\
&=\sum_{j=1}^{n-1}\sum_{i=0}^{n-1-j}(-1)^{i+1}\binom{n+1}{i+1}\binom{n-i}{j}\Gamma_{\beta}^{n-i,n}\Gamma_{\beta}^{n,2n-i-j-1}\frac{f^{(j)}(w)\overline{w}^{n+1-j}}{(1-|w|^2)^{\beta+n-j}}\\
&\quad+\sum_{j=1}^{n-1}(-1)^{n-j+1}\binom{n+1}{j}\Gamma_{\beta}^{j,n}\frac{f^{(j)}(w)\overline{w}^{n+1-j}}{(1-|w|^2)^{\beta+n-j}}.
\end{split}
\end{equation*}

Note that $B_1(w)+B_2(w)=0$, it is sufficient to prove that
\begin{equation*}
A_1(w)+A_2(w)=0\quad {\rm and} \quad C_1(w)+C_2(w)=0.
\end{equation*}
It is then sufficient to prove
\begin{equation}\label{equa2.5}
\Gamma_{\beta}^{n,2n}+\sum_{i=0}^{n-1}(-1)^{i+1}\binom{n+1}{i+1}\Gamma_{\beta}^{n-i,n}\Gamma_{\beta}^{n,2n-i-1}+(-1)^{n+1}\Gamma_{\beta}^{0,n}=0
\end{equation}
and 
\begin{equation}\label{equa2.6}
\begin{split}
&\binom{n+1}{j}\Gamma_{\beta}^{n,2n-j}+\sum_{i=0}^{n-1-j}(-1)^{i+1}\binom{n+1}{i+1}\binom{n-i}{j}\Gamma_{\beta}^{n-i,n}\Gamma_{\beta}^{n,2n-i-j-1}\\
&\quad+(-1)^{n-j+1}\binom{n+1}{j}\Gamma_{\beta}^{j,n}=0,
\end{split}
\end{equation}
for $j=1,2,\cdots,n-1$. Note that $\binom{n+1}{i+1}\binom{n-i}{j}=\binom{n+1}{j}\binom{n+1-j}{i+1}$ and \eqref{equa2.5} is the case for $j=0$ in \eqref{equa2.6}. Therefore, we only need to prove
\begin{equation*}
b_j=\Gamma_{\beta}^{n,2n-j}+\sum_{i=1}^{n-j}(-1)^i\binom{n+1-j}{i}\Gamma_{\beta}^{n-i+1,n}\Gamma_{\beta}^{n,2n-i-j}+(-1)^{n-j+1}\Gamma_{\beta}^{j,n}=0
\end{equation*}
for every $j=0,1,\cdots,n-1$. In fact, let $1=g(x)=h_1(x)h_2(x)$, where $h_1(x)=\frac{1}{(1-x)^{\beta+n}}$ and $h_2(x)=(1-x)^{\beta+n}$, then 
\begin{equation*}
\begin{split}
b_j&=h_1^{(n+1-j)}(0)h_2(0)+\sum_{i=1}^{n-j}\binom{n+1-j}{i}h_1^{(n+1-j-i)}(0)h_2^{(i)}(0)+h_1(0)h_2^{(n+1-j)}(0)\\
&=g^{(n+1-j)}(0)=0,
\end{split}
\end{equation*}
for $j=0,1,\cdots,n-1$. The proof is now complete.
\end{proof}

\begin{proof}[Proof of Proposition 2.6]
The case $n=0$ is trivial. 

For $n=1$, by Lemma \ref{lemma2.2}, we have
\begin{equation}\label{equa2.7}
\begin{split}
\|\alpha_0K_w^{[0]}+\alpha_1K_w^{[1]}\|_p&\gtrsim\sup_{z\in\mathbb{D}}(1-|z|^2)^{\frac{2}{p}}\left|\alpha_0K_w^{[0]}(z)+\alpha_1K_w^{[1]}(z)\right|\\
&\geq (1-|w|^2)^{\frac{2}{p}}\left|\alpha_0K_w^{[0]}(w)+\alpha_1K_w^{[1]}(w)\right|\\
&=\left|\alpha_0\frac{1}{(1-|w|^2)^{\gamma-\frac{2}{p}}}+\alpha_1\frac{w}{(1-|w|^2)^{\gamma+1-\frac{2}{p}}}\right|
\end{split}
\end{equation}
and
\begin{equation}\label{equa2.8}
\begin{split}
\|\alpha_0K_w^{[0]}+\alpha_1K_w^{[1]}\|_p&\gtrsim\sup_{z\in\mathbb{D}}(1-|z|^2)^{\frac{2}{p}+1}\left|\left(\alpha_0K_w^{[0]}+\alpha_1K_w^{[1]}\right)'(z)\right|\\
&\geq(1-|w|^2)^{\frac{2}{p}+1}\left|\left(\alpha_0K_w^{[0]}+\alpha_1K_w^{[1]}\right)'(w)\right|\\
&=\left|\alpha_0\frac{\gamma\overline{w}}{(1-|w|^2)^{\gamma-\frac{2}{p}}}+\alpha_1\frac{\gamma|w|^2+1}{(1-|w|^2)^{\gamma+1-\frac{2}{p}}}\right|.
\end{split}
\end{equation}
Let $|\gamma\overline{w}|(\rm 2.7)+(\rm 2.8)$, using triangle inequality, we obtain
\begin{equation*}
\|\alpha_1K_w^{[1]}\|_p\simeq \frac{|\alpha_1|}{(1-|w|^2)^{\gamma+2-\frac{2}{p}}}\lesssim\left\|\alpha_0K_w^{[0]}+\alpha_1K_w^{[1]}\right\|_p.
\end{equation*}
It follows immediately that
\begin{equation*}
\left\|\alpha_0K_w^{[0]}\right\|_p\lesssim \left\|\alpha_0K_w^{[0]}+\alpha_1K_w^{[1]}\right\|_p.
\end{equation*}

The process above can be regarded as the following Gaussian elimination by rows. Let
$$M_2(z)=A_2(z)B_{2,w}(z)C_{2,\alpha},$$
where 
\begin{align*}
A_2(z)=
\begin{bmatrix}
(1-|z|^2)^{\frac{2}{p}}&0\\
0&(1-|z|^2)^{\frac{2}{p}+1}
\end{bmatrix},
\quad
C_{2,\alpha}=
\begin{bmatrix}
\alpha_0&0\\
0&\alpha_1
\end{bmatrix}
\end{align*}
and
\begin{align*}
B_{2,w}(z)=
&\left[\begin{array}{ll}
K_w^{[0]}(z)&K_w^{[1]}(z)\\
(K_w^{[0]})'(z)&(K_w^{[1]})'(z)
\end{array}\right]\\
=&\left[\begin{array}{ll}
\frac{1}{(1-\overline{w}z)^{\gamma}}&\frac{1}{(1-\overline{w}z)^{\gamma}}\cdot\frac{z}{1-\overline{w}z}\\
\frac{\gamma\overline{w}}{(1-\overline{w}z)^{\gamma+1}}\quad&\frac{\gamma \overline{w}}{(1-\overline{w}z)^{\gamma+1}}\cdot\frac{z}{1-\overline{w}z}+\frac{1}{(1-\overline{w}z)^{\gamma}}\cdot\frac{1}{(1-\overline{w}z)^2}
\end{array}\right].
\end{align*}
Therefore,
\begin{align*}
M_2(w)=
&\begin{bmatrix}
\alpha_0\frac{1}{(1-|w|^2)^{\gamma-\frac{2}{p}}}&\alpha_1\frac{w}{(1-|w|^2)^{\gamma+1-\frac{2}{p}}}\\
\alpha_0\frac{\gamma\overline{w}}{(1-|w|^2)^{\gamma-\frac{2}{p}}}&\alpha_1\frac{\gamma|w|^2+1}{(1-|w|^2)^{\gamma+1-\frac{2}{p}}}
\end{bmatrix}\\
\xrightarrow{\mathbf{r}_2-\gamma\overline{w}\mathbf{r}_1}
&\begin{bmatrix}
\alpha_0\frac{1}{(1-|w|^2)^{\gamma-\frac{2}{p}}}&\alpha_1\frac{w}{(1-|w|^2)^{\gamma+1-\frac{2}{p}}}\\
0&\alpha_1\frac{1}{(1-|w|^2)^{\gamma+1-\frac{2}{p}}}
\end{bmatrix}.
\end{align*}
This means that
\begin{equation*}
|\alpha_1|\frac{1}{(1-|w|^2)^{\gamma+1-\frac{2}{p}}}\lesssim\left\|\alpha_0K_w^{[0]}+\alpha_1K_w^{[1]}\right\|_p.
\end{equation*}

For the general case, by induction, we only need to prove that
\begin{equation}\label{equa2.9}
\left\|\sum_{i=0}^n\alpha_iK_w^{[i]}\right\|_p\gtrsim|\alpha_n|\|K_w^{[n]}\|_p\simeq\frac{|\alpha_n|}{(1-|w|^2)^{\gamma+n-\frac{2}{p}}}.
\end{equation}
After we achieve \eqref{equa2.9}, we can see
\begin{equation*}
\left\|\sum_{i=0}^{n-1}\alpha_iK_w^{[i]}\right\|_p\leq \left\|\sum_{i=0}^{n}\alpha_iK_w^{[i]}\right\|_p+|\alpha_n|\|K_w^{[n]}\|_p\lesssim\left\|\sum_{i=0}^{n}\alpha_iK_w^{[i]}\right\|_p.
\end{equation*}
Then, the induction works to get
\begin{equation*}
|\alpha_j|\|K_w^{[j]}\|_p\lesssim\left\|\sum_{i=0}^{n-1}\alpha_iK_w^{[i]}\right\|_p\lesssim\left\|\sum_{i=0}^n\alpha_iK_w^{[i]}\right\|_p
\end{equation*}
for $j=0,1,\cdots,n-1$.

To prove \eqref{equa2.9}, we conduct similar Gaussian elimination by rows. Let
\begin{align*}
M_{n+1}(z)=A_{n+1}(z)B_{n+1,w}(z)C_{n+1,\alpha},
\end{align*}
where $A_{n+1}(z)=diag\left\{(1-|z|^2)^{\frac{2}{p}},(1-|z|^2)^{\frac{2}{p}+1},\cdots,(1-|z|^2)^{\frac{2}{p}+n}\right\}$, $C_{n+1,\alpha}=diag\{\alpha_0,\alpha_1,\cdots,\alpha_n\}$, and 
$$B_{n+1,w}(z)=\left((K_w^{[i]})^{(j)}(z)\right)_{0\leq i,j\leq n}.$$
We can write $K_w^{[i]}(z)=Q_{n-1}(z)P_{i,n-1-i}(z)$, where $Q_{n-1}(z)=\frac{1}{(1-\overline{w}z)^{\gamma+n-1}}$ and $P_{i,n-1-i}(z)=z^i(1-\overline{w}z)^{n-1-i}$ for $i=0,1,\cdots,n$.

Now taking $f(z)=P_{i,n-1-i}(z)$ in Lemma \ref{lemma2.7}, $z=w$ in the matrix $M_{n+1}(z)$ and letting 
$$\mathbf{r}_{n+1}'=\mathbf{r}_{n+1}+\sum_{i=1}^{n}(-1)^{i}\binom{n}{i}(\gamma+n-1)\cdots(\gamma+n-i)\overline{w}^{i}\mathbf{r}_i,$$
we get the matrix $M_{n+1}'$ as the following form 
\begin{align*}
M_{n+1}^{'}(w)=\left[
\begin{array}{lllll}
a_{1,1}(w)&a_{1,2}(w)&\cdots&a_{1,n}(w)&a_{1,n+1}(w)\\
a_{2,1}(w)&a_{2,2}(w)&\cdots&a_{2,n}(w)&a_{2,n+1}(w)\\
\vdots&\vdots&\ddots&\vdots&\vdots\\
a_{n,1}(w)&a_{n,2}(w)&\cdots&a_{n,n}(w)&a_{n,n+1}(w)\\
0&0&\cdots&0&\alpha_n\frac{n!}{(1-|w|^2)^{\gamma+n-\frac{2}{p}}}
\end{array}
\right].
\end{align*}
This means that
\begin{equation*}
\frac{|\alpha_n|}{(1-|w|^2)^{\gamma+n-\frac{2}{p}}}\lesssim \left\|\sum_{i=0}^{n}\alpha_iK_w^{[i]}\right\|_p.
\end{equation*}
Then,  the proof completes by induction.
\end{proof}

\section{Proof of Theorem 1.1 and Theorem 1.2}

In this section, we present the proof of Theorem 1.1 and Theorem 1.2 about Sobolev-Carleson measures.

\begin{proof}[\bf Proof of Theorem 1.1]
The ``if part" of $(a)$ follows easily from the triangle inequality and the definitions of $(k,p,q)$-Carleson measures and $(\mathbf{u},p,q)$-Sobolev Carleson measures. Similarly, the ``if part" of $(b)$ follows from the triangle inequality and the definitions of vanishing $(k,p,q)$-Carleson measures and vanishing $(\mathbf{u},p,q)$-Sobolev Carleson measures.

For the proof of the ``only if part" of $(a)$, note that the case $n=0$ is trivial. When $n=1$, let $k_w^{[i]}(z)=(1-|w|^2)^{\gamma+i-\frac{2}{p}}K_w^{[i]}(z)$, $i=0,1$. Applying Lemma \ref{lemma2.3}, we get
\begin{equation}\label{equa3.1}
\sup_{w\in\mathbb{D}}\int_{\mathbb{D}}\left|u_0(z)k_w^{[0]}(z)+u_1(z)(k_w^{[0]})'(z)\right|^qd\mu(z)<\infty
\end{equation}
and
\begin{equation}\label{equa3.2}
\sup_{w\in\mathbb{D}}\int_{\mathbb{D}}\left|u_0(z)k_w^{[1]}(z)+u_1(z)(k_w^{[1]})'(z)\right|^qd\mu(z)<\infty.
\end{equation}
Fix $r>0$, the estimate in \eqref{equa3.1} clear gives
\begin{equation*}
\sup_{w\in\mathbb{D}}\int_{D(w,r)}\left|\frac{u_0(z)}{(1-\overline{w}z)^{\gamma}}+\frac{\gamma\overline{w}u_1(z)}{(1-\overline{w}z)^{\gamma+1}}\right|^q(1-|w|^2)^{(\gamma-\frac{2}{p})q}d\mu(z)<\infty.
\end{equation*}
Recall from \eqref{equa2.3} that $|1-\overline{w}z|\simeq(1-|w|^2)$ for $z\in D(w,r)$. Thus
\begin{equation}\label{equa3.3}
\sup_{w\in\mathbb{D}}\int_{D(w,r)}\left|u_0(z)+\frac{\gamma\overline{w}u_1(z)}{1-\overline{w}z}\right|^q\frac{1}{(1-|w|^2)^{\frac{2q}{p}}}d\mu(z)<\infty.
\end{equation}
Similarly, it follows from \eqref{equa3.2} that
\begin{equation}\label{equa3.4}
\sup_{w\in\mathbb{D}}\int_{D(w,r)}\left|zu_0(z)+\frac{(1+\gamma\overline{w}z)u_1(z)}{1-\overline{w}z}\right|^q\frac{1}{(1-|w|^2)^{\frac{2q}{p}}}d\mu(z)<\infty.
\end{equation}
Adding $|z|\cdot$(3.3) and (3.4), we obtain
\begin{equation}\label{equa3.5}
\sup_{w\in\mathbb{D}}\int_{D(w,r)}\left|\frac{u_1(z)}{1-\overline{w}z}\right|^q\frac{1}{(1-|w|^2)^{\frac{2q}{p}}}d\mu(z)<\infty.
\end{equation}
Another application of \eqref{equa2.3} yields
\begin{equation*}
\sup_{w\in\mathbb{D}}\frac{\left(\int_{D(w,r)}|u_1(z)|^qd\mu(z)\right)^{\frac{1}{q}}}{(1-|w|^2)^{1+\frac{2}{p}}}<\infty.
\end{equation*}
This exactly shows that $|u_1|^qd\mu$ is a $(1,p,q)$-Carleson measure by Lemma \ref{lemma2.4}. From \eqref{equa3.3} and \eqref{equa3.5}, we also have 
\begin{equation*}
\sup_{w\in\mathbb{D}}\frac{\left(\int_{D(w,r)}|u_0(z)|^qd\mu(z)\right)^{\frac{1}{q}}}{(1-|w|^2)^{\frac{2}{p}}}<\infty,
\end{equation*}
which shows that $|u_0|^qd\mu$ is a $(0,p,q)$-Carleson measure.

Modifying the proof of Proposition \ref{prop2.6}, the general case $n\geq 2$ can also be proved by induction. 

And the proof for the ``only if part" of (b) is similar, we omit the routine details.
\end{proof}

We now proceed to the proof of Theorem 1.2. Before that, we recall some basic facts of the classical Khinchine's inequality, which is an important tool in complex and functional analysis. An introduction to the topic can be found in Appendix A of \cite{Dp}.

Let $\{r_{k}(t)\}$  denote the sequence of Rademacher functions defined by
$$r_0(t)=\begin{cases}1, &\mbox{if}\ 0\leq t-[t]<\frac{1}{2}\\-1, &\mbox{if}\ \frac{1}{2}\leq t-[t]<1,\end{cases}$$ 
where $[t]$ denotes the largest integer not greater than $t$ and $r_k(t)=r_0(2^kt)$ for $k=1.2,\cdots$.
If $0<p<\infty$, then Khinchine's inequality states that
$$\left(\sum_{k}\left|c_{k}\right|^{2}\right)^{p / 2} \simeq \int_{0}^{1}\left|\sum_{k} c_{k} r_{k}(t)
\right|^{p} dt,$$
for complex sequences $\left\{c_{k}\right\}$.

\begin{proof}[\bf Proof of Theorem 1.2]
The equivalence of (c) and (d) was given in Lemma \ref{lemma2.4}. It is obvious (b) implies (a). That (c) implies (a) and (d) implies (b) follow from the triangle inequality. Thus we only need to prove that (a) implies (c).

To this end, let $\{a_j\}$ be any $r$-lattice in Bergman metric and $\{\lambda_j\}\in l^p$, we set
\begin{equation*}
h_k(z)=\sum_{j}\lambda_j\frac{z^k(1-|a_j|^2)^{\gamma+k-\frac{2}{p}}}{(1-\overline{a}_jz)^{\gamma+k}}, \quad k=0,1,\cdots,n.
\end{equation*}
By Lemma \ref{lemma2.5}, we have
\begin{equation*}
\|h_k\|_p\lesssim \left(\sum_{j}|\lambda_j|^p\right)^{1/p},\quad k=0,1,\cdots, n.
\end{equation*}
For $n=1$, we obtain
\begin{equation}\label{equa3.6}
\int_{\mathbb{D}}|u_0(z)h_0(z)+u_1(z)h_0'(z)|^qd\mu(z)\lesssim \left(\sum_{j}|\lambda_j|^p\right)^{q/p}
\end{equation}
and 
\begin{equation}\label{equa3.7}
\int_{\mathbb{D}}|u_0(z)h_1(z)+u_1(z)h_1'(z)|^qd\mu(z)\lesssim \left(\sum_{j}|\lambda_j|^p\right)^{q/p}.
\end{equation}
In \eqref{equa3.6}, we replace $\lambda_j$ by $r_j(t)\lambda_j$, so that the right-hand side does not change. Then we integrate both sides with respect to $t$ from 0 to 1 to obtain
\begin{equation*}
\begin{split}
&\quad\int_{0}^1\int_{\mathbb{D}}\left|u_0(z)\sum_{j}r_j(t)\lambda_j\frac{(1-|a_j|^2)^{\gamma-\frac{2}{p}}}{(1-\overline{a_j}z)^{\gamma}}\right.\\
&\quad\quad\qquad\qquad\left.+u_1(z)\sum_{j}r_j(t)\lambda_j\frac{\gamma\overline{a_j}(1-|a_j|^2)^{\gamma-\frac{2}{p}}}{(1-\overline{a_j}z)^{\gamma+1}}\right|^qd\mu(z)dt\\
&\lesssim \left(\sum_{j}|\lambda_j|^p\right)^{q/p}.
\end{split}
\end{equation*}
By Khinchine's inequality and Fubini's Theorem,
\begin{equation*}
\begin{split}
&\int_{\mathbb{D}}\left(\sum_{j}|\lambda_j|^2\left|u_0(z)\frac{(1-|a_j|^2)^{\gamma-\frac{2}{p}}}{(1-\overline{a_j}z)^{\gamma}}+u_1(z)\frac{\gamma\overline{a_j}(1-|a_j|^2)^{\gamma-\frac{2}{p}}}{(1-\overline{a_j}z)^{\gamma+1}}\right|^2\right)^{\frac{q}{2}}d\mu(z)\\
\leq&\int_{\mathbb{D}}\int_{0}^1\left|u_0(z)\sum_{j}r_j(t)\lambda_j\frac{(1-|a_j|^2)^{\gamma-\frac{2}{p}}}{(1-\overline{a_j}z)^{\gamma}}\right.\\
&\qquad\qquad\quad\quad\qquad\qquad\left.+u_1(z)\sum_{j}r_j(t)\lambda_j\frac{\gamma\overline{a_j}(1-|a_j|^2)^{\gamma-\frac{2}{p}}}{(1-\overline{a_j}z)^{\gamma+1}}\right|^qdtd\mu(z)\\
=&\int_{0}^1\int_{\mathbb{D}}\left|u_0(z)\sum_{j}r_j(t)\lambda_j\frac{(1-|a_j|^2)^{\gamma-\frac{2}{p}}}{(1-\overline{a_j}z)^{\gamma}}\right.\\
&\qquad\qquad\quad\quad\qquad\qquad\left.+u_1(z)\sum_{j}r_j(t)\lambda_j\frac{\gamma\overline{a_j}(1-|a_j|^2)^{\gamma-\frac{2}{p}}}{(1-\overline{a_j}z)^{\gamma+1}}\right|^qd\mu(z)dt\\
&\lesssim \left(\sum_{j}|\lambda_j|^p\right)^{q/p}.
\end{split}
\end{equation*}
It follows from \eqref{equa2.3} that $\chi_{D(a_j,r)}(z)\lesssim (1-|a_j|^2)/|1-\overline{a_j}z|$. Thus
\begin{equation*}
\begin{split}
&\quad\int_{\mathbb{D}}\left(\sum_{j}|\lambda_j|^2\chi_{D(a_j,r)}(z)\left|u_0(z)+\frac{\gamma\overline{a_j}u_1(z)}{1-\overline{a_j}z}\right|^2\frac{1}{(1-|a_j|^2)^{\frac{4}{p}}}\right)^{\frac{q}{2}}d\mu(z)\\
&\lesssim \left(\sum_{j}|\lambda_j|^p\right)^{q/p}.
\end{split}
\end{equation*}
Recall that there is a positive integer $N$ such that each point $z\in\mathbb{D}$ belongs to at most $N$ of the disks $D(a_j,r)$. Applying Minkowski's inequality if $\frac{2}{q}\leq 1$ and H$\rm\ddot o$lder's inequality if $\frac{2}{q}>1$, we obtain
\begin{equation}\label{equa3.8}
\begin{split}
&\quad\sum_{j}|\lambda_j|^q\int_{D(a_j,r)}\left|u_0(z)+\frac{u_1(z)\gamma\overline{a}_j}{1-\overline{a}_jz}\right|^q\frac{1}{(1-|a_j|^2)^{\frac{2q}{p}}}d\mu(z)\\
&\leq \int_{\mathbb{D}}\sum_{j}|\lambda_j|^q\chi_{D(a_j,r)}(z)\left|u_0(z)+\frac{u_1(z)\gamma\overline{a}_j}{1-\overline{a}_jz}\right|^q\frac{1}{(1-|a_j|^2)^{\frac{2q}{p}}}d\mu(z)\\
&\leq C\int_{\mathbb{D}}\left(\sum_{j}|\lambda_j|^2\chi_{D(a_j,r)}(z)\left|u_0(z)+\frac{\gamma\overline{a}_ju_1(z)}{1-\overline{a}_jz}\right|^2\frac{1}{(1-|a_j|^2)^{\frac{4}{p}}}\right)^{\frac{q}{2}}d\mu(z)\\
&\lesssim \left(\sum_{j}|\lambda_j|^p\right)^{q/p},
\end{split}
\end{equation}
where $C=\max\{N^{1-\frac{q}{2}},1\}$. It follows immediately that
\begin{equation}\label{equa3.9}
\begin{split}
&\quad\sum_{j}|\lambda_j|^q\int_{D(a_j,r)}\left|u_0(z)z+\frac{u_1(z)\gamma\overline{a}_jz}{1-\overline{a}_jz}\right|^q\frac{1}{(1-|a_j|^2)^{\frac{2q}{p}}}d\mu(z)\\
&\lesssim \left(\sum_{j}|\lambda_j|^p\right)^{q/p}.
\end{split}
\end{equation}
Similarly, from \eqref{equa3.7}, we deduce that
\begin{equation}\label{equa3.10}
\begin{split}
&\quad\sum_{j}|\lambda_j|^q\int_{D(a_j,r)}\left|u_0(z)z+\frac{u_1(z)(1+\gamma\overline{a}_jz)}{1-\overline{a}_jz}\right|^q\frac{1}{(1-|a_j|^2)^{\frac{2q}{p}}}d\mu(z)\\
&\lesssim \left(\sum_{j}|\lambda_j|^p\right)^{q/p}.
\end{split}
\end{equation}
Adding \eqref{equa3.9} and \eqref{equa3.10}, we obtain
\begin{equation}\label{equa3.11}
\sum_{j}|\lambda_j|^q\int_{D(a_j,r)}\left|\frac{u_1(z)}{1-\overline{a}_jz}\right|^q\frac{1}{(1-|a_j|^2)^{\frac{2q}{p}}}d\mu(z)\lesssim \left(\sum_{j}|\lambda_j|^p\right)^{q/p}.
\end{equation}
Another application of \eqref{equa2.3} yields
\begin{equation*}
\sum_{j}|\lambda_j|^q\int_{D(a_j,r)}\left|\frac{u_1(z)}{(1-|a_j|^2)^{\frac{2}{p}+1}}\right|^qd\mu(z)\lesssim \left(\sum_{j}|\lambda_j|^p\right)^{q/p}.
\end{equation*}
And \eqref{equa3.11} together with \eqref{equa3.8} yields
\begin{equation*}
\sum_{j}|\lambda_j|^q\int_{D(a_j,r)}\left|\frac{u_0(z)}{(1-|a_j|^2)^{\frac{2}{p}}}\right|^qd\mu(z)\lesssim \left(\sum_{j}|\lambda_j|^p\right)^{q/p}.
\end{equation*}
From the above argument and by the proof of Proposition \ref{prop2.6}, we can also show that
\begin{equation*}
\sum_{j}|\lambda_j|^q\frac{\int_{D(a_j,r)}|u_k(z)|^qd\mu(z)}{(1-|a_j|^2)^{(\frac{2}{p}+k)q}}\lesssim \left(\sum_{j}|\lambda_j|^p\right)^{q/p},
\end{equation*}
for $k=0,1,\cdots,n.$ This implies that the sequence
\begin{equation*}
\left\{\frac{\int_{D(a_j,r)}|u_k(z)|^qd\mu(z)}{(1-|a_j|^2)^{(\frac{2}{p}+k)q}}\right\}_{j=1}^{\infty}
\end{equation*}
belongs to the dual of $l^{\frac{p}{q}}$ or, equivalently,
\begin{equation*}
\sum_{j}\left(\frac{\int_{D(a_j,r)}|u_k(z)|^qd\mu(z)}{(1-|a_j|^2)^{(\frac{2}{p}+k)q}}\right)^{\frac{p}{p-q}}<\infty.
\end{equation*}

Since $(1-|a_j|^2)^2\simeq |D(a_j,r)|$, we can rewrite the above inequality as
\begin{equation*}
\sum_{j}\left(\frac{\int_{D(a_j,r)}|u_k(z)|^qd\mu(z)}{(1-|a_j|^2)^{kq+2}}\right)^{\frac{p}{p-q}}|D(a_j,r)|<\infty.
\end{equation*}
Note that $D(z,r)\subset D(a_j,2r)$ whenever $z\in D(a_j,r)$. Hence
\begin{equation*}
\frac{\int_{D(z,r)}|u_k(w)|^qd\mu(w)}{(1-|z|^2)^{kq+2}}\lesssim\frac{\int_{D(a_j,2r)}|u_k(z)|^qd\mu(z)}{(1-|a_j|^2)^{kq+2}},
\end{equation*}
whenever $z\in D(a_j,r)$. Finally, we have
\begin{equation*}
\begin{split}
&\quad\int_{\mathbb{D}}\left(\frac{\int_{D(z,r)}|u_k(w)|^qd\mu(w)}{(1-|z|^2)^{kq+2}}\right)^{\frac{p}{p-q}}dA(z)\\
&\leq \sum_{j}\int_{D(a_j,r)}\left(\frac{\int_{D(z,r)}|u_k(w)|^qd\mu(w)}{(1-|z|^2)^{kq+2}}\right)^{\frac{p}{p-q}}dA(z)\\
&\lesssim\sum_{j}\left(\frac{\int_{D(a_j,2r)}|u_k(z)|^qd\mu(z)}{(1-|a_j|^2)^{kq+2}}\right)^{\frac{p}{p-q}}|D(a_j,r)|<\infty.
\end{split}
\end{equation*}
It follows from Lemma \ref{lemma2.4} that $|u_k|^qd\mu$ is a $(k,p,q)$-Carleson measure for each $0\leq k\leq n$. This completes the proof of Theorem 1.2.
\end{proof}

\section{Proof of Theorem 1.3 and Theorem 1.4}

\begin{proof}[\bf Proof of Theorem 1.3]
According to Lemma \ref{lemma2.1}, for any $f\in A^p$,
\begin{equation*}
\begin{split}
\|I_{\mathbf{g}}^{(n)}f\|_q&\simeq\sum_{j=0}^{n-1}|(I_{\mathbf{g}}^{(n)}f)^{(j)}(0)|+\left(\int_{\mathbb{D}}|(I_{\mathbf{g}}^{(n)}f)^{(n)}(z)|^q(1-|z|^2)^{nq}dA(z)\right)^{\frac{1}{q}}\\
&=\left(\int_{\mathbb{D}}\left|\sum_{k=0}^{n-1}f^{(k)}(z)g_k(z)\right|^q(1-|z|^2)^{nq}dA(z)\right)^{\frac{1}{q}}.
\end{split}
\end{equation*}
Thus, $I_{\mathbf{g}}^{(n)}:A^p\to A^q$ is bounded if and only if $(1-|z|^2)^{nq}dA(z)$ is a $(\mathbf{g},p,q)$-Sobolev Carleson measure. By Theorem \ref{Theorem1.1}, this is equivalent to the measures $|g_k|^q(1-|z|^2)^{nq}dA$ being $(k,p,q)$-Carleson for $k=0,1,\cdots, n-1$. By Lemma \ref{lemma2.4}, this is then equivalent to 
\begin{equation}\label{equa4.1}
\sup_{w\in\mathbb{D}}\frac{\int_{D(w,r)}|g_k(z)|^q(1-|z|^2)^{nq}dA(z)}{(1-|w|^2)^{(k+\frac{2}{p})q}}<\infty,\quad k=0,1,\cdots,n-1.
\end{equation}

On one hand, by \cite[Proposition 4.3.8]{Zk1} and \eqref{equa2.3}, we have 
\begin{equation*}
\frac{\int_{D(w,r)}|g_k(z)|^q(1-|z|^2)^{nq}dA(z)}{(1-|w|^2)^{(k+\frac{2}{p})q}}\gtrsim |g_k(w)|^q(1-|w|^2)^{(n-k-\frac{2}{p})q+2}.
\end{equation*}
Therefore, (4.1) implies that
\begin{equation*}
\sup_{z\in\mathbb{D}}|g_k(z)|(1-|z|^2)^{n-k-\frac{2}{p}+\frac{2}{q}}<\infty.
\end{equation*}
i.e., $g_k\in\mathcal{B}^{0,n-k-\frac{2}{p}+\frac{2}{q}}$. In this case, $g_k=0$ when $n-k<\frac{2}{p}-\frac{2}{q}$.

On the other hand, if $M_k:=\sup_{z\in\mathbb{D}}|g_k(z)|(1-|z|^2)^{n-k-\frac{2}{p}+\frac{2}{q}}<\infty$, then using \eqref{equa2.3}, we get
\begin{equation*}
\begin{split}
&\quad\frac{\int_{D(w,r)}|g_k(z)|^q(1-|z|^2)^{nq}dA(z)}{(1-|w|^2)^{(k+\frac{2}{p})q}}\\
&\lesssim \int_{D(w,r)}|g_k(z)|^q(1-|z|^2)^{(n-k-\frac{2}{p})q}dA(z)\\
&\lesssim M_k^q\int_{D(w,r)}\frac{1}{(1-|z|^2)^2}dA(z)\\
&\lesssim M_k^q,
\end{split}
\end{equation*}
for all $w\in\mathbb{D}$ and $k=0,1,\cdots,n-1$. Now the proof of (a) is complete.

Similarly, $I_{\mathbf{g}}^{(n)}:A^p\to A^q$ is compact if and only if $(1-|z|^2)^{nq}dA(z)$ is a vanishing $(\mathbf{g},p,q)$-Sobolev Carleson measure. According to Theorem \ref{Theorem1.1} and Lemma \ref{lemma2.4}, this is equivalent to 
\begin{equation}\label{equa4.2}
\lim_{|w|\to1}\frac{\left(\int_{D(w,r)}|g_k(z)|^q(1-|z|^2)^{nq}dA(z)\right)^{\frac{1}{q}}}{(1-|w|^2)^{\frac{2}{p}+k}}=0
\end{equation}
for every $0\leq k\leq n-1$. Just like the proof of (a), \eqref{equa4.2} is equivalent to 
\begin{equation*}
\lim_{|z|\to 1^{-}}|g_k(z)|(1-|z|^2)^{n-k-\frac{2}{p}+\frac{2}{q}}=0.
\end{equation*}
i.e. $g_k\in\mathcal{B}_{0}^{0,n-k-\frac{2}{p}+\frac{2}{q}}$. And in this case, $g_k=0$ when $n-k\leq \frac{2}{p}-\frac{2}{q}$. The proof is complete.
\end{proof}

\begin{proof}[\bf Proof of Theorem 1.4]
The equivalence of (a) and (b) follows from the fact that $(1-|z|^2)^{nq}dA(z)$ is a $(\mathbf{g},p,q)$-Sobolev Carleson measure if and only if it is a vanishing $(\mathbf{g},p,q)$-Sobolev Carleson measure when $0<q<p<\infty$. We now prove the equivalence of (a) and (c).

First assume $I_{\mathbf{g}}^{(n)}:A^p\to A^q$ is bounded. By Theorem \ref{Theorem1.2} and Lemma \ref{lemma2.4}, we get
\begin{equation}\label{equa4.3}
\frac{\int_{D(z,r)}|g_k(w)|^q(1-|w|^2)^{nq}dA(w)}{(1-|z|^2)^{2+kq}}\in L^{\frac{p}{p-q}}(\mathbb{D},dA),
\end{equation}
for every $0\leq k\leq n-1$. Applying \cite[Proposition 4.3.8]{Zk1} and \ref{equa2.3}, we have
\begin{equation*}
\frac{\int_{D(z,r)}|g_k(w)|^q(1-|w|^2)^{nq}dA(w)}{(1-|z|^2)^{2+kq}}\gtrsim(1-|z|^2)^{(n-k)q}|g_k(z)|^q.
\end{equation*}
Combining this with \eqref{equa4.3}, we get
\begin{equation*}
\int_{\mathbb{D}}\left|g_k(z)(1-|z|^2)^{n-k}\right|^{\frac{pq}{p-q}}dA(z)<\infty.
\end{equation*}
This shows that (a) implies (c).

Conversely, assume the condition (c) holds. Then for any $f\in A^p$, by Lemma \ref{lemma2.1} and H$\rm\ddot{o}$lder's inequality, we have
\begin{equation*}
\begin{split}
\|I_{\mathbf{g}}^{(n)}f\|_q^q&\simeq\int_{\mathbb{D}}\left|\sum_{k=0}^{n-1}f^{(k)}(z)g_k(z)\right|^q(1-|z|^2)^{nq}dA(z)\\
&\lesssim\sum_{k=0}^{n-1}\int_{\mathbb{D}}|f^{(k)}(z)g_k(z)|^q(1-|z|^2)^{nq}dA(z)\\
&\lesssim\sum_{k=0}^{n-1}\left(\int_{\mathbb{D}}|f^{(k)}(z)|^p(1-|z|^2)^{kp}dA(z)\right)^{\frac{q}{p}}\left(\int_{\mathbb{D}}|g_k(z)(1-|z|^2)^{n-k}|^{\frac{pq}{p-q}}dA(z)\right)^{\frac{p-q}{p}}\\
&\lesssim \sum_{k=0}^{n-1}\|f\|_p^q\left(\int_{\mathbb{D}}|g_k(z)(1-|z|^2)^{n-k}|^{\frac{pq}{p-q}}dA(z)\right)^{\frac{p-q}{p}}\\
&\lesssim \|f\|_p^q.
\end{split}
\end{equation*}
This establishes the boundedness of $I_{\mathbf{g}}^{(n)}$ from $A^p$ to $A^q$. 
\end{proof}

\begin{proof}[\bf Proof of Corollary 1.5]
Let $g_k=g^{n-k}$ in Theorem \ref{Theorem1.3}, we know that $I_{g,a}:A^p\to A^q$ if and only if 
\begin{equation*}
\sup_{z\in\mathbb{D}}|g^{n-k}(z)|(1-|z|^2)^{n-k-\frac{2}{p}+\frac{2}{q}}<\infty,
\end{equation*}
for each $k=0,1,\cdots,n-1$.
These are exactly equivalent to that 
\begin{equation*}
\sup_{z\in\mathbb{D}}|g'(z)|(1-|z|^2)^{1-\frac{2}{p}+\frac{2}{q}}<\infty.
\end{equation*}
i.e. $g\in \mathcal{B}^{1,1-\frac{2}{p}+\frac{2}{q}}$. The proof of the compactness part is similar and we omit the routine details.
\end{proof}

\begin{proof}[\bf Proof of Corollary 1.6]
Let $g_k=g^{(n-k)}$ in Theorem \ref{Theorem1.4}. For $0<q<p<\infty$, we know that $I_{g,a}:A^p\to A^q$ is bounded if and only if it is compact, and if and only if 
$$\int_{\mathbb{D}}|g^{(n-k)}(z)(1-|z|^2)^{n-k}|^{\frac{pq}{p-q}}dA(z)<\infty$$ 
for every $0\leq k\leq n-1$, which is then equivalent to that $g\in A^{\frac{pq}{p-q}}$ by Lemma \ref{lemma2.1}. This gives the desired result.
\end{proof}

\section{Proof of Theorem 1.7}

\begin{proof}
We show that Theorem 1.7 is a consequence of Theorem 1.3. To this end, we recall the operator 
$$I_{\mathbf{g}}^{(n)}f=I^n(f^{(n-1)}g_{n-1}+\cdots+fg_0).$$
By Theorem \ref{Theorem1.3}, there exists a constant $C>0$, depending only on $p$, such that 
$$\|I_{\mathbf{g}}^{(n)}\|\leq C\sum_{k=0}^{n-1}\|g_k\|_{0,n-k}.$$
If $\|g_k\|_{0,n-k}\leq \frac{1}{nC}$ for all $0\leq k\leq n-1$, then the norm $\|I_{\mathbf{g}}^{(n)}\|<1$, hence $-1$ is in the resolvent of $I_{\mathbf{g}}^{(n)}$ on $A^p$.

Let $f$ be any solution of the differential equation \eqref{equa1.1}. There exists $F_0\in A^p$ such that $F_0-I^nF$ is a polynomial of degree less than $n$ and $F_0^{(i)}(0)=f^{(i)}(0)$ for $0\leq i\leq n$. Choose $\tilde{f}\in A^p$ such that $I_{\mathbf{g}}^{(n)}\tilde{f}+\tilde{f}=F_0$. But $\tilde{f}$ and $f$ satisfy the same differential equation with the same initial condition. By the uniqueness of the solution of the initial value problem, $f=\tilde{f}\in A^p$.

If $g_k\in \mathcal{B}_{0}^{0,n-k}$, then $I_{\mathbf{g}}^{(n)}$ is compact on $A^p$. In this case, we will prove that the spectrum of $I_{\mathbf{g}}^{(n)}$ is the singleton $\{0\}$. Then the desired result will follow from the previous argument. Thus it suffices to prove that $I_{\mathbf{g}}^{(n)}$ has no nonzero eigenvalue. To see this, note that if $\lambda\in\mathbb{C}\backslash\{0\}$, the equation $I_{\mathbf{g}}^{(n)}f=\lambda f$ is equivalent to the initial value problem
\begin{equation*}
f^{(n-1)}g_{n-1}+\cdots+fg_0=\lambda f^{(n)},\quad f(0)=\cdots=f^{(n-1)}(0)=0.
\end{equation*}
By the uniqueness of solutions, we must have $f=0$.
\end{proof}

\section{Proof of Theorem 1.8 and Theorem 1.9}

In this section, we present the proof of Theorems 1.8 and 1.9 about the rigidity of the sum operator $L_{\mathbf{u},\varphi}^{(n)}$.

Assume $u\in H(\mathbb{D})$ and $\varphi$ is an analytic self-map of $\mathbb{D}$. We define the weighted pull-back measure $\mu_{u,\varphi}$ on $\mathbb{D}$ by
\begin{equation*}
\mu_{u,\varphi}(E)=\int_{\varphi^{-1}(E)}|u(z)|^qdA(z),
\end{equation*}
where $E$ is any Borel subset of $\mathbb{D}$. For any $f\in H(\mathbb{D})$, 
\begin{equation*}
\begin{split}
\|W_{u,\varphi}^{(k)}f\|_q^q&=\int_{\mathbb{D}}|u(z)f^{(k)}(\varphi(z))|^qdA(z)\\
&=\int_{\mathbb{D}}|f^{(k)}(w)|^qd\mu_{u,\varphi}(w).
\end{split}
\end{equation*}
Thus, for $0<p,q<\infty$, $W_{u,\varphi}^{(k)}:A^p\to A^q$ is bounded (compact) if and only if $\mu_{u,\varphi}$ is (vanishing) $(k,p,q)$-Carleson measure. For simplicity, set $\mu_k:=\mu_{u_k,\varphi}$ for $0\leq k\leq n$.

We are now ready to prove Theorem 1.8.
\begin{proof}[\bf Proof of Theorem 1.8]
The ``if part" is trivial, we only need to prove the ``only if part". Through this proof, we study the cases $p\leq q$ and $p>q$ separately.

{\bf The case $\mathbf {p\leq q}$.} The case $n=0$ is trivial. Now we prove the case $n=1$. If $W_{u_0,\varphi}+W_{u_1,\varphi}^{(1)}:A^p\to A^q$ is bounded, then
\begin{equation}\label{equa6.1}
\sup_{w\in\mathbb{D}}\int_{\mathbb{D}}|u_0(z)k_w^{[0]}(\varphi(z))+u_1(z)(k_w^{[0]})'(\varphi(z))|^qdA(z)<\infty
\end{equation}
and 
\begin{equation}\label{equa6.2}
\sup_{w\in\mathbb{D}}\int_{\mathbb{D}}|u_0(z)k_w^{[1]}(\varphi(z))+u_1(z)(k_w^{[1]})'(\varphi(z))|^qdA(z)<\infty,
\end{equation}
where $k_w^{[i]}(z)=(1-|w|^2)^{\gamma+i-\frac{2}{p}}K_w^{[i]}(z)$, $i=0,1$, as in the proof of Theorem 1.1.

Fix $r>0$, the estimate in \eqref{equa6.1} clear gives
\begin{equation*}
\sup_{w\in\mathbb{D}}(1-|w|^2)^{\gamma-\frac{2}{p}}\int_{\varphi^{-1}(D(w,r))}\left|\frac{u_0(z)}{(1-\overline{w}\varphi(z))^{\gamma}}+\frac{u_1(z)\gamma\overline{w}}{(1-\overline{w}\varphi(z))^{\gamma+1}}\right|^qdA(z)<\infty.
\end{equation*}
Recall from \eqref{equa2.3} that $|1-\overline{w}\varphi(z)|\simeq (1-|w|^2)$ for $z\in\varphi^{-1}(D(w,r))$. Thus
\begin{equation}\label{equa6.3}
\sup_{w\in\mathbb{D}}\frac{1}{(1-|w|^2)^{\frac{2q}{p}}}\int_{\varphi^{-1}(D(w,r))}\left|u_0(z)+\frac{u_1(z)\gamma\overline{w}}{1-\overline{w}\varphi(z)}\right|^qdA(z)<\infty.
\end{equation}
Similarly, it follows from \eqref{equa6.2} that
\begin{equation}\label{equa6.4}
\sup_{w\in\mathbb{D}}\frac{1}{(1-|w|^2)^{\frac{2q}{p}}}\int_{\varphi^{-1}(D(w,r))}\left|u_0(z)\varphi(z)+\frac{u_1(z)(1+\gamma\overline{w}\varphi(z))}{1-\overline{w}\varphi(z)}\right|^qdA(z)<\infty.
\end{equation}
Let $|\varphi|\cdot$(6.3)+(6.4), then by triangle inequality and \eqref{equa2.3}, we get
\begin{equation*}
\sup_{w\in\mathbb{D}}\frac{1}{(1-|w|^2)^{(\frac{2}{p}+1)q}}\int_{\varphi^{-1}(D(w,r))}|u_1(z)|^qdA(z)<\infty.
\end{equation*}
This shows that $\mu_1$ is a $(1,p,q)$-Carleson measure. Thus $W_{u_1,\varphi}^{(1)}:A^p\to A^q$ is bounded, then so is $W_{u_0,\varphi}$. Modifying the proof as in Proposition 2.6, the general case $n\geq 2$ can be also proved by induction. And the ``compactness part" is similar, we omit the routine details.

{\bf The case $\mathbf {p>q}$.} In this case, by Lemma \ref{lemma2.4}, we know that $\mu_k$ is a $(k,p,q)$-Carleson measure if and only if it is a vanishing $(k,p,q)$-Carleson measure. And then $W_{u_k,\varphi}^{(k)}:A^p\to A^q$ is bounded if and only if it is compact. So we only need to show that each $\mu_k$, $0\leq k\leq n$, is a $(k,p,q)$-Carleson measure if $L_{\mathbf{u},\varphi}^{(n)}:A^p\to A^q$ is bounded. To this end, set 
\begin{equation*}
h_k(z)=\sum_{j}\lambda_j\frac{z^k(1-|a_j|^2)^{\gamma+k-\frac{2}{p}}}{(1-\overline{a}_jz)^{\gamma+k}},\quad k=0,1,\cdots,n,
\end{equation*}
as in the proof of Theorem 1.2, where $\{\lambda_j\}\in l^p$ and $\{a_j\}$ is any $r$-lattice in the Bergman metric.

If $L_{\mathbf{u},\varphi}^{(n)}:A^p\to A^q$ is bounded, then
\begin{equation*}
\int_{\mathbb{D}}\left|\sum_{i=0}^nu_i(z)h_k^{(i)}(\varphi(z))\right|^qdA(z)\lesssim \left(\sum_{j}|\lambda_j|^p\right)^{\frac{q}{p}}
\end{equation*}
for $k=0,1,\cdots,n$. Just like the proof in Theorem 1.2 and Proposition 2.6, we could obtain
\begin{equation*}
\sum_{j}|\lambda_j|^q\frac{\mu_{k}((D_{a_j},r))}{(1-|a_j|^2)^{(\frac{2}{p}+k)q}}\lesssim\left(\sum_{j}|\lambda_j|^p\right)^{\frac{q}{p}}
\end{equation*}
for every $k=0,1,\cdots,n$. And then
\begin{equation*}
\sum_{j}\left[\frac{\mu_k(D(a_j,r))}{(1-|a_j|^2)^{(\frac{2}{p}+k)q}}\right]^{\frac{p}{p-q}}<\infty.
\end{equation*}
Therefore, by \eqref{equa2.3}, we have
\begin{equation*}
\begin{split}
&\quad\int_{\mathbb{D}}\left[\frac{\mu_k(D(z,r))}{(1-|z|^2)^{kq+2}}\right]^{\frac{p}{p-q}}dA(z)\\
&\leq \sum_{j}\int_{D(a_j,r)}\left[\frac{\mu_k(D(z,r))}{(1-|z|^2)^{kq+2}}\right]^{\frac{p}{p-q}}dA(z)\\
&\lesssim \sum_{j}\left[\frac{\mu_k(D(a_j,2r))}{(1-|a_j|^2)^{kq+2}}\right]^{\frac{p}{p-q}}|D(a_j,r)|\\
&\simeq\sum_{j}\left[\frac{\mu_k(D(a_j,2r))}{(1-|a_j|^2)^{(\frac{2}{p}+k)q}}\right]^{\frac{p}{p-q}}<\infty.
\end{split}
\end{equation*}
Then Lemma \ref{lemma2.4} tells us that $\mu_k$ is a $(k,p,q)$-Carleson measure for every $0\leq k\leq n$. The proof is complete.
\end{proof}

We now proceed to the proof of Theorem 1.9. Before that, we need to know the action of the ajoint of $L_{\mathbf{u},\varphi}^{(n)}$ on the kernel functions.

\begin{lemma}\label{lemma6.1}
Suppose $L_{\mathbf{u},\varphi}^{(n)}$ is bounded on $A^2$, then
\begin{equation*}
(L_{\mathbf{u},\varphi}^{(n)})^{*}\mathcal{K}_z=\sum_{j=0}^n\overline{u_j(z)}\mathcal{K}_{\varphi(z)}^{[j]},
\end{equation*}
for all $z\in\mathbb{D}$.
\end{lemma}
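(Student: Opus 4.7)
The plan is to verify the identity by testing both sides against an arbitrary $f\in A^2$ via the $A^2$ inner product, exploiting the reproducing property of $\mathcal{K}_z$ and $\mathcal{K}_{\varphi(z)}^{[j]}$. Concretely, for any $f\in A^2$, start from the definition of the adjoint,
\begin{equation*}
\bigl\langle f,\,(L_{\mathbf{u},\varphi}^{(n)})^{*}\mathcal{K}_z\bigr\rangle_2
=\bigl\langle L_{\mathbf{u},\varphi}^{(n)}f,\,\mathcal{K}_z\bigr\rangle_2,
\end{equation*}
and then unravel the right-hand side using the reproducing property $\langle g,\mathcal{K}_z\rangle_2=g(z)$ and the definition of $L_{\mathbf{u},\varphi}^{(n)}$ to rewrite it as $\sum_{j=0}^n u_j(z)f^{(j)}(\varphi(z))$.

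Next I would apply the higher-order reproducing formula $f^{(j)}(w)=\langle f,\mathcal{K}_w^{[j]}\rangle_2$ at $w=\varphi(z)$, introduced in the paragraph after Lemma~\ref{lemma2.2}, to obtain
\begin{equation*}
\sum_{j=0}^n u_j(z)\bigl\langle f,\mathcal{K}_{\varphi(z)}^{[j]}\bigr\rangle_2
=\Bigl\langle f,\,\sum_{j=0}^n \overline{u_j(z)}\,\mathcal{K}_{\varphi(z)}^{[j]}\Bigr\rangle_2,
\end{equation*}
where the conjugates arise from pulling the scalars $u_j(z)$ out of the antilinear slot. Since $f\in A^2$ is arbitrary and both $(L_{\mathbf{u},\varphi}^{(n)})^{*}\mathcal{K}_z$ and $\sum_{j=0}^n\overline{u_j(z)}\mathcal{K}_{\varphi(z)}^{[j]}$ belong to $A^2$, the Riesz representation theorem forces them to coincide.

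The only thing to double-check is that $\sum_{j=0}^n\overline{u_j(z)}\mathcal{K}_{\varphi(z)}^{[j]}$ actually lies in $A^2$ for each fixed $z\in\mathbb{D}$, which is immediate from $\|\mathcal{K}_{\varphi(z)}^{[j]}\|_2\simeq (1-|\varphi(z)|^2)^{-(j+1)}$ noted after Lemma~\ref{lemma2.3}. The boundedness hypothesis on $L_{\mathbf{u},\varphi}^{(n)}$ is only used to ensure that $(L_{\mathbf{u},\varphi}^{(n)})^{*}$ is a well-defined bounded operator on $A^2$, so that the identification via the adjoint pairing is legitimate. No part of the argument is expected to present a genuine obstacle; the whole lemma is a direct consequence of the reproducing-kernel formalism once the conjugation bookkeeping is done correctly.
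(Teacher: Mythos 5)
Your proposal is correct and follows exactly the paper's argument: pair $(L_{\mathbf{u},\varphi}^{(n)})^{*}\mathcal{K}_z$ against an arbitrary $f\in A^2$, use the reproducing property of $\mathcal{K}_z$ and of the higher-order kernels $\mathcal{K}_{\varphi(z)}^{[j]}$, and conclude by the uniqueness part of the Riesz representation. The only difference is that you explicitly verify membership of the candidate in $A^2$, which the paper leaves implicit.
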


\begin{proof}
For any $f\in A^2$ and $z\in\mathbb{D}$, we have
\begin{equation*}
\begin{split}
\langle f,(L_{\mathbf{u},\varphi}^{(n)})^*\mathcal{K}_z\rangle_2&=\langle L_{\mathbf{u},\varphi}^{(n)}f,\mathcal{K}_z\rangle_2\\
&=\sum_{j=0}^nu_j(z)f^{(j)}(\varphi(z))\\
&=\langle f,\sum_{j=0}^n\overline{u_j(z)}\mathcal{K}_{\varphi(z)}^{[j]}\rangle_2.
\end{split}
\end{equation*}
Thus,
\begin{equation*}
(L_{\mathbf{u},\varphi}^{(n)})^{*}\mathcal{K}_z=\sum_{j=0}^n\overline{u_j(z)}\mathcal{K}_{\varphi(z)}^{[j]}.
\end{equation*}
\end{proof}

\begin{proof}[\bf Proof of Theorem 1.9]
The ``if part" is trivial, we only need to prove the ``only if part".

Suppose $L_{\mathbf{u},\varphi}^{(n)}$ is Hilbert-Schmidt on $A^2$, then $L_{\mathbf{u},\varphi}^{(n)}(L_{\mathbf{u},\varphi}^{(n)})^*$ belongs to the trace class. According to \cite[Theorem6.6]{Zk1},
$$\langle L_{\mathbf{u},\varphi}^{(n)}(L_{\mathbf{u},\varphi}^{(n)})^*\mathcal{K}_z,\mathcal{K}_z\rangle_2\in L^1(\mathbb{D},dA).$$
On the other hand, by Lemma \ref{lemma6.1} and Proposition \ref{prop2.6}, we have
\begin{equation*}
\begin{split}
\langle L_{\mathbf{u},\varphi}^{(n)}(L_{\mathbf{u},\varphi}^{(n)})^*\mathcal{K}_z,\mathcal{K}_z\rangle_2&=\left\|(L_{\mathbf{u},\varphi}^{(n)})^*\mathcal{K}_z\right\|_2^2\\
&=\left\|\sum_{j=0}^n\overline{u_j(z)}\mathcal{K}_{\varphi(z)}^{[j]}\right\|_2^2\\
&\simeq\sum_{j=0}^{n}|u_j(z)|^2\|\mathcal{K}_{\varphi(z)}^{[j]}\|_2^2.
\end{split}
\end{equation*}
Therefore, by Lemma \ref{lemma2.3}, 
\begin{equation*}
\int_{\mathbb{D}}|u_j(z)|^2\|\mathcal{K}_{\varphi(z)}^{[j]}\|_2^2dA(z)\simeq\int_{\mathbb{D}}\frac{|u_j(z)|^2}{(1-|\varphi(z)|^2)^{2+2j}}dA(z)<\infty.
\end{equation*}
Then, by \cite[Theorem3.1]{Zx1}, $W_{u_j,\varphi}^{(j)}$ is Hilbert-Schmidt on $A^(\mathbb{D})$ for every $j=0,1,\cdots,n$.
\end{proof}

\begin{remark}
Note that the differentiation-composition operator $D_{\varphi}^{(n)}$ can be represented as a combination of generalized weighted composition operators with different orders by Fa\`a De Bruno's formula concerning higher order derivatives of a composite function. Thus, the boundedness and compactness of $D_{\varphi}^{(n)}$ can be characterized via the corresponding property for $L_{\mathbf{u},\varphi}^{(n)}$.
\end{remark}

\begin{remark}
For $0<p<\infty$ and $\alpha>-1$, the classical weighted Bergman space $A_{\alpha}^p$ is defined by 
\begin{equation*}
A_{\alpha}^p=\left\{f\in H(\mathbb{D}): \|f\|_{\alpha,p}^p=(\alpha+1)\int_{\mathbb{D}}|f(z)|^p(1-|z|^2)^{\alpha}dA(z)<\infty\right\}.
\end{equation*}
The results can be extended to $A_{\alpha}^p$ without much difficulty. The Dirichlet-type space $D^p_{\alpha}$ is the space consisting of all $f\in H(\mathbb{D})$ such that $f'\in A_{\alpha}^p$ and $\|f\|_{D_{\alpha}^p}=|f(0)|+\|f'\|_{\alpha,p}$. For quite some time, the characterizations for boundedness and compactness of $W_{u,\varphi}$ on $D_{\alpha}^p$ are incomplete. Inspired by the key idea in Proposition 2.6, we can draw the following result: $W_{u,\varphi}:D_{\alpha}^p\to D_{\alpha}^q$ is bounded (compact, resp.) if and only if both $W_{u\varphi',\varphi}: A_{\alpha}^p\to A_{\alpha}^q$ and $W_{u',\varphi}:D_{\alpha}^p\to A_{\alpha}^q$ are bounded (compact, resp.). In particular, let $\varphi(z)=z$, then $u$ is a multiplier of $D_{\alpha}^p$ if and only if $u$ is a multiplier of $A_{\alpha}^p$, i.e., $u\in H^{\infty}$ and $M_{u'}$ is bounded from $D_{\alpha}^p$ to $A_{\alpha}^p$, which coinsides with the result in \cite[Lemma4.1]{Wz}.
\end{remark}

\end{document}